\newcommand{\placeholder}{\quad}
\author
{Rodolphe~\textsc{Richard}}
\address{Rodolphe \textsc{Richard}\\
University College London,\\
Dept. of Mathematics,\\
25 Gordon Street\\
London\\
WC1H 0AY\\
United Kingdom
}
\email{r.richard@ucl.ac.uk}
\subjclass[2010]{11F11, 11G18, 11F23, 11E12, 14G35, 11G15}
\keywords{Modular curve, Singular invariants, Equidistribution, Supersingular, Analytic Number Theory,
Subconvexity}
\newtheorem{thm}{Theorem}[section]
\numberwithin{equation}{section}
\newtheorem{theorem}[thm]{Theorem}
\newtheorem{definition}[thm]{Definition}
\newtheorem{proposition}[thm]{Proposition}
\newtheorem{conjecture}[thm]{Conjecture}
\newtheorem{lemme}[thm]{Lemma}
\theoremstyle{definition}
\newtheoremstyle{explication}
  {\topsep}
  {\topsep}
  {\itshape}
  {1em}
  {\bfseries}
  {}
  {0pt}
  {}
\theoremstyle{explication}
\newcommand{\Nm}[1]{{\left\|{#1}\right\|}}
\newcommand{\Tr}{{\mathbf{Tr}}{}}
\newcommand{\Nmred}{{\mathbf{Nm}}}
\newcommand{\ol}[1]{\overline{#1}}
\newcommand{\A}{\mathbf{A}}
\newcommand{\Z}{\mathbf{Z}}
\newcommand{\Q}{\mathbf{Q}}
\newcommand{\F}{\mathbf{F}}
\newcommand{\C}{\mathbf{C}}
\newcommand{\eps}{\varepsilon}
\newcommand{\Ocal}{\mathcal{O}}
\newcommand{\tens}{\otimes}
\newcommand{\abs}[1]{{\left|{#1}\right|}}
\newcommand{\covol}{\mathrm{covol}}
\newcommand{\red}{\mathrm{r{e}d}}
\newcommand{\R}{\mathbf{R}}
\DeclareMathOperator{\End}{End}
\DeclareMathOperator{\Spec}{Spec}
\title{A two-dimensional arithmetic André-Oort problem}
\date{\today}
\begin{document}
\titlepage


\begin{abstract} 
We state and investigate an integral  analogue of the Andr\'e-Oort conjecture (in integral models 
of Shimura varieties). 
We establish an instance of this conjecture: the case of a modular curve, \emph{as a scheme over~$\mathbf{Z}$}.
It is a scheme of dimension two and, already in this case, our conjecture is highly non-trivial.
Our approach relies on equidistribution estimates related to subconvexity in analytic number theory and
our result is unconditional.
\end{abstract} 
\maketitle
\tableofcontents
\section{Introduction}
The André-Oort conjecture asserts that in a Shimura variety~$S$, the Zariski closure~$\ol{\Sigma}$ of any set of special points~$\Sigma\subseteq S(\ol{\Q})$ is a finite union of "special subvarieties". 
We refer to~\cite{AOtous} and its introduction for a statement and for recent advances on the André-Oort conjecture.

In dimension one cases, such as~$S=Y(1)$ (the modular curve), the André-Oort conjecture is trivial. 
The first non trivial cases occur in dimension two: for example the product~$S=Y(1)\times Y(1)$ of two modular curves.
This was proven by André~\cite{Andre}, unconditionally, and by Edixhoven~\cite{Edixhoven} too, by another method relying on the generalised Riemann hypothesis (GRH) for quadratic fields. 
Pila has treated this case in  \cite{Pila} using the Pila-Zannier o-minimal strategy that was a strating point of a tremendous recent
activity around the Andr\'e-Oort conjecture and its generalisations.

In this article we state the "horizontal part" of an "André-Oort conjecture in an arithmetic pencil" (see \S.\ref{sec:hAO}).
This extends the André-Oort conjecture, taking into account integral models of Shimura varieties. 
Our main result establishes the simplest case of our conjecture:
the case of~$S=Y(1)_{\Z}$ which is a two dimensional scheme.

This first case turns out to be highly non-trivial. We used advanced tools from analytic number theory 
which are not used in approaches
to the classical André-Oort conjecture~\cite{AOtous} and its generaisations.

There is an analogue of our conjecture for abelian varieties ("Manin-Mumford
conjecture in arithmetic pencils") which has been established in~\cite{BRU}.	

\subsection{}The present work was inspired by~\cite{Edixhoven Richard} in which analogues of the Andr\'e-Oort conjecture
in positive characteristic were considered.

In positive characteristic~$p$, the naive analogue of the André-Oort conjecture is trivially false. Every point in~$S(\ol{\F_p})$ is a special point, whereas (if~$S$ is of dimension at least two), most sub-varieties are not special. 

Still, a natural analogue in characteristic $p$ is given in~\cite{Edixhoven Richard}. That article studies the case~$S=Y(1)\times Y(1)$ over~$\F_p$. Rather than considering \emph{individual special points}, the authors consider a class of \emph{finite subsets of points}. 
 We refer to these finite subsets as ``special $0$-cycles'' in Def.~\ref{main def}. In~\cite{Edixhoven Richard} an analogue of the André-Oort conjecture established in the case of~$S=Y(1)\times Y(1)$ over~$\F_p$. The method is an adaptation of Edixhoven's method (~\cite{Edixhoven}), and the result is conditional on the Generalised Riemann Hypothesis for imaginary quadratic fields. 
 

\subsection{}
In this article, we consider the
 ``arithmetic'' setting (cf.~\S\ref{section une}): we allow the characteristic~$p$ to vary. 
We consider the case of~$S=Y(1)=\A^1$ over~$\Z$. The integral model is the scheme~$\A^1_\Z$, which is of dimension two: it is of relative dimension one (one geometric dimension), and the base~$\operatorname{Spec}(\Z)$ is of dimension one (one arithmetic dimension). 

Our main result establishes our conjecture in this~$S=Y(1)$ case. Our result is  unconditional.
\subsection{Summary of the article}\label{sec:summary} In Section~\ref{section une} we state precisely our two dimensional arithmetic problem and our main result. We
also state the ''integral André-Oort conjecture''. In section~\ref{section deux} we study some easy cases and identify the hard case. We then reduce the hard case to an equidistribution property. In section~\ref{section trois} we reduce this problem to proving some uniform upper bounds for some arithmetic multiplicities. In section~\ref{section quatre} we introduce quadratic forms, and complete the proof using appropriate estimates on representation numbers of some ternary quadratic forms. One of these estimates is due to Duke, relying on work of Iwaniec, Duke and Schulze-Pillot, and is now known as "sub-convexity" in analytic number theory. Another estimate is needed and it is established in the last section~\ref{section last}.

An appendix recalls some invariants of Gross lattices attached to some Brandt algebras, and extends a result of Duke and Schulze-Pillot for the case under study.

\subsection{Acknowledgements} This work was carried out at the University of Cambridge and at University College London. The author thanks both institutions for hospitality.
 The author was supported by the Leverhulme Trust grant RPG-2019-180 which is gratefully acknowledged. The author thanks Andrei Yafaev for numerous comments.

\section{Statement of the problem and Main result}\label{section une}
The following notations are standard. \emph{We denote by~$\overline{\Q}$ an algebraic and algebraically closed extension of~$\Q$. For each prime~$p$ we denote by~$\F_p$ a prime field of characteristic~$p$ and~$\overline{\F_p}$ an algebraic and algebraically closed extension thereof, and~$\F_{p^2}$ the quadratic extension of~$\F_p$ in~$\overline{\F_p}$.} 

The underlying subsets of the~$\overline{\F_p}$, for varying primes, are chosen to be disjoint.\footnote{For Definition~\eqref{Definition}.}
We also denote by~$\overline{\Z}\leq \overline{\Q}$ the subring of algebraic \emph{integers}.

The special points of~$Y(1)(\ol{\Q})\simeq \ol{\Q}$ are the modular invariants of elliptic~$E$ curves with~$CM$: their endomorphism ring~$\End(E)$ is isomorphic to an order in a quadratic imaginary extension of~$\Q$. These are classically known as \emph{singular invariants}. Every singular invariant has an associated "discriminant"~$\Delta$: the discriminant of the quadratic order~$\End(E)$. We will call \emph{negative discriminant number} the numbers obtained this way: equivalently~$\Delta\in\{0;1\}+4\cdot\Z_{<0}$ (cf.~\cite[p.\,52]{Kowalski}). After Weber and Fueter,
(see~\cite[\S C.11, Th. 11.2]{Silverman},) 
we know that every singular invariant is an algebraic integer (it belongs to~$\ol{\Z}$), and that two singular invariants share the same discriminant if and only if they are algebraically conjugated: they belong to the same~$\operatorname{Gal}(\ol{\Q}/\Q)$-orbit. In the scheme-theoretic language, this Galois orbit corresponds to a unique closed point~$z_{\Delta}$ in the scheme~$\A^1_{\Q}$.

Here is our main definition. We refer to~\cite{Edixhoven Richard} for a similar setting.
\begin{definition}\label{main def}
 For any~$\Delta\in\{0;1\}+4\cdot\Z_{<0}$, we denote by~$S(\Delta)\subseteq\overline{\Z}$ the set of singular invariants with associated discriminant~$\Delta$. A \emph{special~$0$-cycle} is a set of the form (for some prime~$p$ and some~$\Delta\in\{0;1\}+4\cdot\Z_{<0}$),
\[
s=\red_p(S(\Delta))\subseteq Y(1)(\ol{\F_p})=\ol{\F_p}
\]
where~$\red_p:\overline{\Z}\to\overline{\F_p}$ is any ring homomorphism. 

The Zarsiki closure~$\overline{\{z_\Delta\}}$, in the scheme~$\A^1_\Z$, is called \emph{a singular section}
\emph{(of discriminant~$\Delta$)}.
\end{definition}
\noindent Remark. The scheme~$Z=\overline{\{z_\Delta\}}$ satisfies~$Z(\ol{\Q})=S(\Delta)$ and~$Z(\ol{\F_p})=\red_p(S(\Delta))$. 
In particular, the set~$\red_p(S(\Delta))$ does not depend on the choice of a "reduction map"~$\red_p:\ol{\Z}\to\ol{\F_p}$.

The problem we are interested in is: what is
\begin{equation}\label{problem}
\text{the Zariski closure in the scheme~$\A^1_\Z$ of a family~$(s_i)_{i\in I}$ of special $0$-cycles~$s_i=\red_{p_i}(S(\Delta_i))$.}
\end{equation}
 
Obvious closed subsets of the scheme~$\A^1_\Z$ arise this way, such as:
\begin{enumerate}
\item \label{cas a}any special $0$-cycle~$\red_p(S(\Delta))$ itself: when~$p_i=p$ and~$\Delta_i=\Delta$ are fixed;
\item \label{cas b}singular sections~$\overline{\{z_\Delta\}}$: when~$p_i$ ranges through primes and~$\Delta_i=\Delta$ is fixed;
\item \label{cas c}fibres~$\A^1_{\F_p}$ over a prime~$p$: when~$p_i=p$ is fixed and~$\Delta_i$ ranges through~$\{0;1\}+4\cdot\Z_{<0}$;
\item \label{cas d}the ambient space~$\A^1_\Z$ itself: for the family of all special~$0$-cycles.
\end{enumerate}


Our main result is a converse statement.
\begin{theorem}\label{main thm}
 Let~$\Sigma$ be a set of special $0$-cycles in the sense of Def.~\ref{main def}, and denote by~$\overline{\bigcup\Sigma}$ the Zariski closure in the scheme~$\A^1_\Z$ of~$\bigcup\Sigma=\bigcup\left\{s \mid s\in \Sigma\right\}$.

Then~$\overline{\bigcup\Sigma}$ is finite union of subsets, each of the form~\eqref{cas a},~\eqref{cas b},~\eqref{cas c}, or~\eqref{cas d}.
\end{theorem}

\subsection{Horizontal part of André-Oort conjecture in arithmetic pencils}\label{sec:hAO}
We record for future reference a case of
an André-Oort conjecture in arithmetic pencils.\footnote{The "horizontal" case
refers to the condition~$\#O_E/\mathfrak{p}_n\to +\infty$. We defer the discussion of "vertical part" of the conjecture.} We use the scheme-theoretic language.

Let~$S$ be a Shimura variety over a number field~$E$ denote by~$O_E\leq E$ the subring of integers, and let $\mathscr{S}$ be a model of~$S$ over~$O_E[1/N]$ for some~$N\in E^\times$. For every special subvariety~$Z\subseteq S$, denote by~$\ol{Z}$ its Zariski closure\footnote{Our convention is that the Zariski closure of a variety~$Z\subseteq S$ defined over an extension~$B/E$ is the closure of the image of the scheme~$Z$ in the scheme~$\mathscr{S}$.} in the scheme~$\mathscr{S}$. For every prime~$\mathfrak{p}$ of~$O_E[1/N]$, we denote by~$\mathscr{S}_{\mathfrak{p}}=\mathscr{S}\tens_{O_E}O_E/\mathfrak{p}$ the fibre of~$\mathscr{S}$ above~$\mathfrak{p}$ and define
\[
\ol{Z}_{\mathfrak{p}}:=\ol{Z}\cap \mathcal{S}_{\mathfrak{p}}.
\]
We conjecture the following. 
The analogue conjecture in the context of Manin-Mumford has been obtained in~\cite{BRU}. 
\begin{conjecture}[Horizontal part of André-Oort conjecture in arithmetic pencils]\label{hAO}
 let~$z_n\in S(\ol{\Q})$ be a sequence of special points, and~$\mathfrak{p}_n$ a sequence of primes of~$O_E[1/N]$,
and define~$E_n:=(\ol{Z_n})_{\mathfrak{p}_n}$ where~$Z_n$ is the singleton~${\{z_n\}}$. Assume~$\#O_E/\mathfrak{p}_n\to +\infty$.
Then the Zariski closure~$V:=\ol{\bigcup_{n\in\Z_{\geq0}} E_n}$ can be written as
\[
V=E_{n_1}\cup\ldots\cup E_{n_i}\cup \ol{Z'_1}\cup\ldots \cup \ol{Z'_j},
\]
with~$n_1,\ldots,n_i\in\Z_{\geq0}$ and where~$Z'_1\cup\ldots \cup Z'_j\subseteq S$ are special subvarieties.
\end{conjecture}
\noindent Remark. This "horizontal part" Conjecture~\ref{hAO} does not depend on the choice of the model~$\mathscr{S}$. It implies the classical André-Oort conjecture for~$S$. The current successful approach to the classical André-Oort conjecture is recalled in~\cite{AOtous}. Many of the tools used in the classical case
can't be expected to make sense in the extension above. Indeed, even in the simplest case~$S=Y(1)$, studied in this article, we encountered new difficulties, and needed to introduce different tools.


\section{Easy cases and the hard case}\label{section deux}

The global structure of the proof is an argumentation by cases, all but one being easy. Note that by the nature of the main statement, we can always partition~$\Sigma$ into finitely many subsets and treat each of these subsets separately.
\subsection{} We use the following terminology in order to delimit subcases.
\begin{definition}\label{Definition} The \emph{characteristic} of a special~$0$-cycle~$s$ is the\footnote{Special~$0$-cycles are non empty. The fields~$\overline{\F_p}$ were assumed disjoints.} prime number~$p$ such that~$s\subseteq\overline{\F_p} = \A^1(\overline{\F_p})$.

The \emph{degree} of a special~$0$-cycle~$s$ is its cardinality.

A special~$0$-cycle~$s$ either consists of only ordinary invariants, or of only supersingular invariants. We call it \emph{ordinary} or \emph{supersingular} accordingly. 
\end{definition}

Observe that the degree of a special $0$-cycle~$\red_p(S(\Delta))$ is bounded by~$h(\Delta)=\abs{S(\Delta))}$, hence finite.

\subsection{}Our first reduction steps are achieved by treating the easy cases.
\subsubsection{Bounded characteristic}
The case of bounded characteristic is easy. This is a problem of Krull dimension one, and it is trivial. (By decomposing~$\Sigma$ further we may even assume the characteristic is fixed). This can also be deduced from~\cite{Edixhoven Richard}, which is more general but conditional.
\subsubsection{Divergent subsequence}
The case where~$\Sigma$ contains an infinite subsequence with divergent characteristic and divergent degree is easy. Let us prove that the closure~$\overline{\bigcup\Sigma}$ is then the ambient space~$\A^1_\Z$.
\begin{proof}
Let~$Z$ be the union of the irreducible components of~$\overline{\bigcup\Sigma}$ which are of prime characteristic: contained in some fibre~$\A^1_{\F_p}$. As the characteristic is divergent along our subsequence, only finitely many special~$0$-cycles from this  sequence are contained in~$Z$. We cast away these special~$0$-cycles, as we may. It follows the Zariski closure~$\overline{\bigcup\Sigma}$ is ``of characteristic~$0$'', that is\footnote{The generic points of its irreducible components are sent to the generic point of~$\Spec(\Z)$.} ``flat over~$\Z$''. Hence~$\overline{\bigcup\Sigma}$ is a finite section\footnote{The closure of finitely many points of~$\A^1_\Q$, the sum of degrees of which is~$d$.} (of relative dimension~$0$) of some generic degree~$d$, or the ambient space. The latter case agrees with the claim. It is enough to see the former case cannot happen. Indeed the fibres of~$\overline{\bigcup\Sigma}$ over a prime~$p$ have at most~$d$ points, hence can only support cycles special~$0$-cycles of bounded degree. This contradicts an assumption: the divergence of the degree along our subsequence.
\end{proof}
\subsubsection{Reduction step}

We may assume that~$\Sigma$ does not contain an infinite subsequence with divergent characteristic and divergent degree. It follows that~$\Sigma$ can be partitioned into two families, one of bounded characteristic, one of bounded degree. We may consider each subfamily separately. We already treated the first case. We may henceforth assume that elements in~$\Sigma$ have bounded degree	; and moreover assume that the characteristic is divergent: there are only finitely many special~$0$-cycles of bounded degree in each characteristic.

We reduced the problem to the case of a sequence of special~$0$-cycles of bounded degree, and divergent characteristic.

As in the proof above, we may furthermore assume that~$\overline{\bigcup\Sigma}$ is of characteristic~$0$: a finite section or the ambient space.

\subsubsection{Ordinary case}

We partition~$\Sigma$ into ordinary and supersingular special~$0$-cycles and treat each case independently. 
%
%
%

The case of ordinary cycles is not hard. We use that they are unramified in the following sense.
\begin{definition}
A special~$0$-cycle~$s$ is said to be \emph{unramified} if it is the fibre of~$\overline{\{z_\Delta\}}$ over a prime~$p$ for a singular invariant~$z_\Delta$ which is unramified at~$p$; equivalently:~$\abs{\red_p(S(\Delta))}=\abs{S(\Delta)}$. 
Then the degree of~$s$ is the degree of~$z_\Delta$.

We call such a~$z_\Delta$ an \emph{unramified lift} of~$s$.

For an ordinary special~$0$-cycle there is a unique such~$z_\Delta$, given by the canonical lift. (See also Deuring's theorem~\cite[\S13.4 Th\ 12\,(ii), Th.\ 13]{Lang Elliptic} from~\cite{Deuring}.)
\end{definition}
We also use the following fact.
\paragraph{Fact}\label{Fact h Delta} \emph{There are finitely many singular sections of bounded degree.} Equivalently the degree~$[\Q(z_\Delta):\Q]$ diverges with the discriminant. This degree is a class number~$h(\Delta)$ by the theory of complex multiplication. The divergence of~$h(\Delta)$ is a conjecture of Gau\ss{} solved by Heilbronn \cite[\S22.4]{Kowalski}. By a more precise result~\cite{Siegel} of Siegel, the degree follows the asymptotic~
\begin{equation}\label{Siegel estimate}
h(\Delta)=\Delta^{1/2+o(1)}.
\end{equation}

Unramified special~$0$-cycles of bounded degree at most, say,~$D$, have each an unramified lift of degree at most~$D$, and an unramified lift will have bounded discriminant at most~$\Delta(D)\approx D^{2+o(1)}$.  We will write our proof in terms of a slightly weaker property.

\begin{definition} A family of special~$0$-cycles is said to have property, say, P if any subsequence of divergent characteristic and bounded degree arise as reduction from special sections of bounded discriminant.
\end{definition}
In a formula, for such a sequence~$(s_i)_{i\geq 0}$
\[
\forall D>0, \exists \Delta(D), \forall i \left[\# s_i\leq D\implies \left(\exists \Delta\leq\Delta(D), \exists p, s_i=\red_p(S(\Delta))\right)\right].
\]

We now prove the theorem for a family~$\Sigma$ with property~P.
\begin{proof}We already reduced the theorem to the case of a subsequence of bounded degree and divergent characteristic.

We may apply property P to our sequence. Our special $0$-cycles arise as reduction from special sections of bounded discriminant. We are left with finitely many discriminants, and everything occurs in a finite union of singular sections. This is again a problem of Krull dimension one, and is again trivial. (We can decompose further and assume the discriminant is fixed, in which case we are in a single, irreducible, singular section).
\end{proof}
\subsection{The hard case}
We have identified a last case, the case of supersingular special~$0\text{-}$cy\-cles of bounded degree and unbounded characteristic.  We complete the proof of~\ref{main thm} provided we can prove the following. The remaining of this article is devoted to the treatment of this case. 
\begin{theorem}\label{theorem P}
The set of supersingular special~$0$-cycles satisfies property P. Namely, for every upper bound~$C<\infty$, there exist~$p(C)<\infty$ and~$\Delta(C)<\infty$ such that the following holds.

For every special~$0$-cycle~$s$ of degree at most~$C$, its characteristic~$p$ is at most~$p(C)$, or otherwise there is a discriminant~$\Delta$ at most~$\Delta(C)$, such that~$s$ is the reduction of~$S(\Delta)$ in characteristic~$p$. 
\end{theorem}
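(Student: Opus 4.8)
The plan is to argue by contradiction, turning the failure of property~P into the existence of abnormally large fibres of the reduction map, and then to rule these out by upper bounds on representation numbers of ternary quadratic forms. If Theorem~\ref{theorem P} failed, there would be a bound~$C$ and a sequence of supersingular special~$0$-cycles $s_i=\red_{p_i}(S(\Delta_i))$ with $\#\,s_i\leq C$, with characteristic $p_i\to\infty$, and admitting no bounded representing discriminant; since there are only finitely many discriminants below any threshold, one may then also assume $\Delta_i\to\infty$. By the Siegel estimate~\eqref{Siegel estimate} one has $h(\Delta_i)=\abs{S(\Delta_i)}=\abs{\Delta_i}^{1/2+o(1)}\to\infty$, whereas the reduction map $S(\Delta_i)\to s_i$ has image of cardinality at most~$C$. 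The fibres therefore cannot all be small: up to the bounded automorphism weights, some supersingular point~$x$ receives multiplicity $m_x\gg h(\Delta_i)/C$. The whole game is to show this is impossible once $p_i$ and $\Delta_i$ are both large.

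Next I would translate $m_x$ into arithmetic. By the theory of Deuring and Gross, recalled in the appendix, the multiplicity with which a supersingular point~$x$, whose endomorphism ring is a maximal order $\mathcal{O}_x$ in the quaternion algebra ramified at $\{p,\infty\}$, occurs in $\red_p(S(\Delta))$ counts optimal embeddings of the order of discriminant~$\Delta$ into~$\mathcal{O}_x$; equivalently it is a representation number $r_{Q_x}(\abs{\Delta})$ of the ternary quadratic form $Q_x$ carried by the Gross lattice of~$\mathcal{O}_x$, a form of determinant comparable to~$p^2$. Summing over~$x$ recovers the total mass $h(\Delta)$. These $r_{Q_x}(\abs{\Delta})$ are the Fourier coefficients of a weight-$3/2$ theta series, whose expected (Eisenstein) size per point is $\approx \sqrt{\abs{\Delta}}/p\approx h(\Delta)/p$ — already $o(h(\Delta))$ once $p\to\infty$, in agreement with the $\sim p/12$ supersingular points sharing the mass $h(\Delta)$ nearly equally.

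The decisive step is to bound the deviation from this main term uniformly in~$p$. In the principal regime I would invoke Duke's subconvexity bound for the cuspidal part of the ternary theta series — resting on the half-integral weight estimates of Iwaniec and on Duke's Petersson norm estimate for the correct normalisation — obtaining a power saving $m_x-(\text{main term})\ll_\varepsilon \abs{\Delta}^{1/2-\delta+\varepsilon}$, crucially uniform in the growing level $\approx p$. In the complementary regime, where $\abs{\Delta}$ is too small relative to~$p$ for subconvexity to help, a representation $Q_x(v)=\abs{\Delta}$ forces~$v$ to be a short vector; bounding such vectors through Hermite's systole inequality for the determinant-$p^2$ lattice, and controlling their number by Dirichlet's class-number bounds for the binary forms they span, supplies the missing estimate. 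Either way $m_x=o(h(\Delta))$ uniformly once $p$ and~$\Delta$ are large, contradicting $m_x\gg h(\Delta)/C$. Hence $\Delta$ must have stayed bounded, which is property~P, and the earlier reduction to finitely many singular sections then completes the theorem.

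The main obstacle I anticipate is precisely the uniformity in~$p$: a subconvexity bound for half-integral weight forms that is explicit and uniform in a level growing with the characteristic, together with a Petersson-norm normalisation sharp enough that the cuspidal error is provably smaller than the per-point main term $h(\Delta)/p$ — and then matching this against the elementary systole argument across the boundary regime, where the two estimates must be made to overlap.
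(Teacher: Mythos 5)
Your proposal follows essentially the same route as the paper: the pigeonhole inequality $h(\Delta)\leq \#\red_p(S(\Delta))\cdot\max_{\bar\jmath}{\red_p}_\star[S(\Delta)](\bar\jmath)$ reduces the theorem to a uniform $o(h(\Delta))$ bound on the multiplicities, which are then identified with (primitive) representation numbers of the determinant-$32p^2$ Gross ternary lattices and bounded by Duke's subconvexity estimate (with a level-uniform Petersson norm bound) when $\abs{\Delta}$ dominates, and by the Hermite--Dirichlet slicing argument when $p$ dominates, the two ranges being made to overlap. The only point you leave implicit is the reduction, via the invariance $\red_p(S(\Delta))=\red_p(S(\Delta\cdot p^2))$, to $p$-fundamental discriminants, which is what makes ``no bounded representing discriminant'' imply that the relevant $\Delta_i$ genuinely diverge and that the Deuring--Gross dictionary between multiplicities and primitive representations applies.
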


We do not know an easy proof. This article relies on elaborate results to prove this last statement.
\subsubsection{}

Let us point out that the closely related statement 
\begin{equation}\label{limite naive}
\lim_{(\Delta,p)\to(-\infty,+\infty)} \#\red_p(S(\Delta))\stackrel{?}{=}+\infty
\end{equation}
does not hold, even in the case of ordinary reduction. This is because of the invariance phenomenon\footnote{Both members are in the orbit of the other for the Hecke operator~$T_p$, as can be checked before reduction over~$\C$ in terms of ideals, and then seen locally, id\'{e}lically. That is, by Eichler-Shimura relation, they have the same orbit under Frobenius. But these sets are already definable over~$\F_p$.}
\begin{equation}\label{phenomenon}
\red_p(S(\Delta))=\red_p(S(\Delta\cdot p^2)).
\end{equation}
A counter example would be~$\Delta=-3p^2$ with~$\red_p(S(\Delta))=\red_p(S(-3))=\{0\pmod{p}\}$.
\subsubsection{}

We introduce some definitions in order to discuss more precisely the case we have identified. We begin with standard definitions, some of which we already used.
\begin{definition}A \emph{discriminant number} is an integer of the form~$\Delta=B^2-4AC$, that is with quadratic residue modulo~$4$. Its \emph{conductor} is the biggest integer~$f(\Delta)>0$ such that~$\Delta_{\text{f.}}=\Delta/f(\Delta)^2$ is a discriminant. A discriminant of the form~$\Delta_{\text{f.}}$, that is of conductor~$1$, is called \emph{fundamental}.

In terms of quadratic orders,~$\Delta$ is a discriminant if and only if~$\tau_\Delta=\frac{\Delta+\sqrt{\Delta}}{2}$ is an algebraic integer, that is~$\Ocal_\Delta=\Z[\tau_\Delta]$ is an imaginary quadratic order. We characterize~$f(\Delta)$ by~$\Ocal_\Delta=\Z+f(\Delta)\cdot\Ocal_{\Q(\sqrt{\Delta})}$ and~$\Delta_{\text{f.}}$ by~$\Ocal_{\Q(\sqrt{\Delta})}=\Ocal_{\Delta_{\text{f.}}}$.

We denote by~$h(\Delta)=\abs{S(\Delta)}=\abs{Pic(\Ocal_\Delta)}$ the class number of~$\Ocal_\Delta$.
\end{definition}
We introduce the following definition.
\begin{definition}
Let~$p$ be a prime.

We say that~$\Delta$ is \emph{$p$-fundamental} if its conductor is prime to~$p$. 

We denote by~$\Delta_{\text{$p$-f.}}=\Delta_{\text{f.}}\cdot {f}^2$ where~$f$ is the prime-to-$p$ part of~$f(\Delta)$, the most negative $p$-fundamental discriminant number dividing~$\Delta$.
\end{definition}
\begin{proposition}
A special $0$-cycle is the reduction of~$S(\Delta)$ for some $p$-fundamental~$\Delta$.
\end{proposition}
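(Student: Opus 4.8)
The plan is to begin from an arbitrary presentation $s=\red_p(S(\Delta))$ of the given special $0$-cycle and to strip away the $p$-part of the conductor by repeated use of the invariance phenomenon~\eqref{phenomenon}. First I would record the factorization of $\Delta$ relative to $p$. Writing $f(\Delta)=f\cdot p^m$ with $\gcd(f,p)=1$ and $m=v_p(f(\Delta))$, the definition of $\Delta_{\text{$p$-f.}}$ gives $\Delta_{\text{$p$-f.}}=\Delta_{\text{f.}}\cdot f^2$, whence
\[
\Delta=\Delta_{\text{f.}}\cdot f(\Delta)^2=\Delta_{\text{f.}}\cdot f^2\cdot p^{2m}=\Delta_{\text{$p$-f.}}\cdot p^{2m}.
\]
By construction $\Delta_{\text{$p$-f.}}$ is $p$-fundamental, so it suffices to prove the single equality $\red_p(S(\Delta))=\red_p(S(\Delta_{\text{$p$-f.}}))$.

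The core of the argument is then a telescoping application of~\eqref{phenomenon}. For $0\leq j\leq m$ set $\Delta_j=\Delta_{\text{$p$-f.}}\cdot p^{2j}$, so that $\Delta_0=\Delta_{\text{$p$-f.}}$ and $\Delta_m=\Delta$. Each $\Delta_j$ is again a discriminant number — for $p$ odd because $p^{2j}\equiv1\pmod4$, and for $p=2$ because $4^j\equiv0\pmod4$ when $j\geq1$ — and the identity $\Delta_j=\Delta_{\text{f.}}\cdot(f\cdot p^{j})^2$ exhibits its fundamental part as $\Delta_{\text{f.}}$ and its conductor as $f\cdot p^{j}$, so that the $p$-part of the conductor decreases by one at each step. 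Applying the invariance phenomenon to the discriminant $\Delta_j$ yields $\red_p(S(\Delta_j))=\red_p(S(\Delta_{j+1}))$ for each $0\leq j<m$, and chaining these equalities gives
\[
\red_p(S(\Delta_{\text{$p$-f.}}))=\red_p(S(\Delta_0))=\cdots=\red_p(S(\Delta_m))=\red_p(S(\Delta))=s,
\]
which is exactly the claim, with $\Delta_{\text{$p$-f.}}$ playing the role of the advertised $p$-fundamental discriminant.

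I expect essentially no obstacle beyond a correct reading of~\eqref{phenomenon}. The only point deserving a moment of care is that the invariance is an \emph{equality}, so it may be read in the descending direction — removing a factor $p^2$ from the conductor — just as well as in the ascending one in which~\eqref{phenomenon} is stated; its validity at each intermediate $\Delta_j$ is immediate, since~\eqref{phenomenon} is asserted for an arbitrary discriminant. I would conclude by noting that the $\Delta_{\text{$p$-f.}}$ produced here is precisely the most negative $p$-fundamental discriminant dividing $\Delta$, in agreement with the notation fixed above.
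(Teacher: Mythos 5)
Your proof is correct and follows the same route as the paper: the paper's proof is a one-line invocation of "iterating equality~\eqref{phenomenon} to reduce the discriminant" by Fermat descent, which is exactly the telescoping chain $\red_p(S(\Delta_j))=\red_p(S(\Delta_{j+1}))$ that you spell out. Your version merely makes the induction on the $p$-adic valuation of the conductor explicit, which is a faithful unpacking rather than a different argument.
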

\begin{proof}Iterating equality~\eqref{phenomenon} to reduce the discriminant, we end up by Fermat descent with 
\[
\red_p(S(\Delta))=\red_p(S(\Delta_{\text{$p$-f.}})).\qedhere
\]
\end{proof}
\subsubsection{Limit formula}

We will prove the following. This is a strengthening of theorem~\ref{theorem P}, as we prove that we can take as~$\Delta$ any $p$-fundamental discriminant. This shows that phenomenon~\eqref{phenomenon} is the ultimate obstacle to the validity of~\eqref{limite naive}.
\begin{theorem} For every upper bound~$C<\infty$, there exist~$p(C)<\infty$ and~$\Delta(C)<\infty$ such that the following holds.
For every prime~$p>p(C)$ and $p$-fundamental discriminant~$\Delta>\Delta(C)$, the reduction of~$S(\Delta)$ in characteristic~$p$ has more than~$C$ elements.

Equivalently
\begin{equation}\label{limit theorem}
\lim_{\substack{(\Delta,p)\to(-\infty,+\infty) \\p\nmid f(\Delta)}} \#\red_p(S(\Delta)){=}+\infty.
\end{equation}
\end{theorem}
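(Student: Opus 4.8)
The plan is to translate the statement into the arithmetic of supersingular reduction and then into estimates on representation numbers of ternary quadratic forms. First I would invoke the Deuring correspondence: in characteristic~$p$ the supersingular invariants are in bijection with the left ideal classes of a maximal order~$\Ocal_i$ in the quaternion algebra $B_{p,\infty}$ ramified exactly at~$p$ and~$\infty$, of which there are $\sim p/12$; call the corresponding points~$x_1,\dots,x_n$. Since we are in the supersingular case the whole cycle reduces into $\{x_1,\dots,x_n\}$, so with $m_i(\Delta)=\#\bigl(\red_p^{-1}(x_i)\cap S(\Delta)\bigr)$ one has $\sum_i m_i(\Delta)=h(\Delta)$. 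By Deuring's lifting theory and the computations of Gross and Gross--Zagier (recalled, with the relevant Gross lattices, in the appendix), each $m_i(\Delta)$ equals, up to the local factors at~$p$ and a bounded constant, the $\abs{\Delta}$-th Fourier coefficient of the weight-$3/2$ theta series of the Gross lattice~$L_i$ of trace-zero elements of~$\Ocal_i$, a ternary form of discriminant $\asymp p^2$. As $\#\red_p(S(\Delta))=\#\{i:m_i(\Delta)>0\}$, the pigeonhole inequality
\[
\#\red_p(S(\Delta))\;\geq\;\frac{\sum_i m_i(\Delta)}{\max_i m_i(\Delta)}\;=\;\frac{h(\Delta)}{\max_i m_i(\Delta)}
\]
reduces the theorem, in view of Siegel's lower bound $h(\Delta)=\abs{\Delta}^{1/2+o(1)}$, to proving the uniform multiplicity bound $\max_i m_i(\Delta)=o\bigl(h(\Delta)\bigr)$ as $(\Delta,p)\to(-\infty,+\infty)$ with $p\nmid f(\Delta)$. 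This is the reformulation announced for section~\ref{section trois}.

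I would then establish this uniform bound by estimating the ternary representation numbers $r_{L_i}(\abs{\Delta})$, splitting the $(\Delta,p)$-plane at a threshold $p\asymp\abs{\Delta}^{\eta}$. In the range $p\leq\abs{\Delta}^{\eta}$, where $h(\Delta)$ dominates the number $\sim p/12$ of supersingular points and one expects equidistribution à la Michel, I would separate each coefficient of the theta series into its Eisenstein part and its cuspidal part. The Eisenstein part contributes the expected mass $\asymp h(\Delta)/p$, which is $o(h(\Delta))$ as soon as $p\to\infty$; the cuspidal part is controlled by the subconvexity estimates of Iwaniec, Duke and Schulze-Pillot for half-integral weight forms, which save a power $\abs{\Delta}^{-\delta}$ over the convexity bound. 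The crucial and delicate point, and the reason Duke's Petersson-norm estimate for ternary theta series is needed, is that these bounds must be made \emph{uniform in the level}, which here grows like~$p$; this uniformity is exactly what upgrades Michel's equidistribution to a statement valid uniformly in the characteristic, giving $\max_i m_i(\Delta)\ll h(\Delta)/p=o(h(\Delta))$ in this range.

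In the complementary range $p\geq\abs{\Delta}^{\eta}$ the subconvexity input is no longer uniform, and I would run the elementary argument of section~\ref{section last}. Here each~$L_i$ has discriminant $\asymp p^2$, hence covolume $\asymp p$, so by Minkowski its largest successive minimum satisfies $\lambda_3(L_i)\gtrsim p^{1/3}$. I would fibre the representations of $\abs{\Delta}$ over the coordinate along the third reduced basis vector: that coordinate ranges over $\ll\abs{\Delta}^{1/2}/\lambda_3(L_i)+1$ values, and for each fixed value the remaining equation is a representation problem for a binary form, bounded by Dirichlet's divisor-type estimate $\ll\abs{\Delta}^{\epsilon}$ (representations by a binary form being counted by ideals of a given norm in a quadratic order). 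This yields $\max_i m_i(\Delta)\ll\abs{\Delta}^{1/2+\epsilon}/p^{1/3}$; since $p\geq\abs{\Delta}^{\eta}$ forces $p^{1/3}\gg\abs{\Delta}^{\eta/3}$, choosing $\epsilon$ small against~$\eta$ makes this $o\bigl(h(\Delta)\bigr)$ and closes the range. Together with the first range this covers the whole plane and proves the theorem.

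The hard part, I expect, is neither the quaternionic bookkeeping nor the pigeonhole reduction, but the uniformity in~$p$ of the analytic bound in the first range: the subconvexity and Petersson-norm estimates must be tracked with explicit polynomial control in the level $\sim p$, and the threshold~$\eta$ must be chosen so that the elementary range and the subconvexity range genuinely overlap, leaving no gap in the $(\Delta,p)$-plane. A second delicate point is the control of the $p$-adic local factors in the passage from optimal-embedding numbers to Fourier coefficients, ensuring they neither vanish spuriously nor blow up; this is precisely why the hypothesis that $\Delta$ be $p$-fundamental, $p\nmid f(\Delta)$, is built into the statement.
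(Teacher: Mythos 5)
Your proposal follows essentially the same route as the paper: the same pigeonhole reduction to a uniform multiplicity bound, the same translation into representation numbers of Gross lattices, the same split into a subconvexity range (Eisenstein/cuspidal decomposition, Duke--Schulze-Pillot with Duke's Petersson-norm estimate for level uniformity) and an elementary range (a $p^{1/3}$ saving from reduction theory combined with Dirichlet's divisor bound for the binary slices, which the paper packages via the Hermite--Rankin constant $\gamma_{2,3}$ and a lemma on integer-valued binary quadratic \emph{polynomials} rather than forms). The only substantive points you gloss over are technical rather than structural: the M\"obius inversion to primitive representations, and the extension of the square-free subconvexity input to even fundamental discriminants and nontrivial conductors via the $U_4$ operator and Shimura lifts, which occupies the paper's appendix.
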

The divergence of~$\Delta$ is obviously necessary. 
The divergence of~$p$ on top of that of~$\Delta$ is necessary for supersingular special~$0$-cycles: for a fixed prime~$p$
\[
\lim_{\substack{\Delta\to-\infty\\ p\nmid f(\Delta)\\ \red_p(S(\Delta))\subseteq SS(p)}} \#\red_p(S(\Delta)){=}\#SS(p)=\frac{p-1}{12}+O_{p\to\infty}(1),
\]
where~$SS(p)$ denotes the supersingular locus. This follows for instance from effective sparse equidistribution result~\cite[Th.\,3]{Michel}. 

\section{Upper bounds Strategy}\label{section trois}
For each prime~$p$, choose a reduction map~$\red_p:\overline{\Z}\to\overline{\F_p}$. 
\subsection{Setting}

We denote by
\[SS(p)=SS(p)(\overline{\F_p})\subseteq{\F_{p^2}}\subseteq\overline{\F_p}
\qquad \text{and} \qquad S(\Delta)=S(\Delta)(\overline{\Z})\subseteq\overline{\Z}
\]
the set of supersingular invariants of characteristic~$p$, and the set of singular invariants of discriminant~$\Delta$. We are interested in lower bounds for the cardinality
\[
\abs{\red_p(S(\Delta))}
\]
where we can assume that~$\Delta$ is $p$-fundamental.
\subsection{}
We denote by~$[S(\Delta)]$ the standard cycle on the finite set~$S(\Delta)$, seen as the counting measure, or as the unity density function. We consider its direct image
\[
{\red_p}_\star[S(\Delta)]
\]
on~$\overline{\F_p}$. It is a finite measure whose support is the image~$\red_p(S(\Delta))\subseteq \overline{\F_p}$, and whose density~${\red_p}_\star[S(\Delta)](\{\bar\jmath\})$ at a point~$\bar\jmath\in\overline{\F_p}$ is the cardinality~$\#\{j\in S(\Delta)|\red_p(j)=\bar\jmath\}$ of the inverse image of~$\bar\jmath$ by the restricted reduction map~$\red_p|_{S(\Delta)}$.
\subsection{} Using theses multiplicities, we have the obvious bound
\begin{equation}\label{comparaison}
h(\Delta)=\sum_{\bar\jmath\in SS(p)}{\red_p}_\star[S(\Delta)](\{\bar\jmath\})\leq
\abs{\red_p(S(\Delta))}\cdot\max_{\bar\jmath\in SS(p)}{\red_p}_\star[S(\Delta)](\bar\jmath),
\end{equation}
where~$h(\Delta)=\abs{S(\Delta)}$ denotes the class number. Roughly speaking, our strategy to ensure that~$S(\Delta)$ visits many places of~$\overline{\F_p}$ is to check that it does not stay too long in each place. \emph{Our problem is reduced to obtaining upper bounds for these multiplicities}~${\red_p}_\star[S(\Delta)](\bar\jmath)$.

\subsection{} Precisely: By virtue of~\eqref{comparaison}, one reduces~\eqref{limit theorem} to the following.
\begin{proposition}With the above notations, the following equivalent statements hold
\[\max_{\bar\jmath\in \overline{\F_p}}{\red_p}_\star[S(\Delta)](\bar\jmath)=o_{\small \substack{\\(\Delta,p)\to(-\infty,+\infty) \\ p\nmid f(\Delta)}}(h(\Delta)),\]
\[\lim_{\substack{\\(\Delta,p)\to(-\infty,+\infty) \\ p\nmid f(\Delta)}}\frac{\max_{\bar\jmath\in \overline{\F_p}}{\red_p}_\star[S(\Delta)](\bar\jmath)}{h(\Delta)}=0.\]
\end{proposition}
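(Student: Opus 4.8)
The plan is to convert the multiplicity into a representation number of a ternary quadratic form and then to bound it uniformly in $p$, splitting into two complementary ranges of the pair $(\Delta,p)$. First I would fix representatives $E_1,\dots,E_n$ of the supersingular curves in characteristic $p$, so that $n=\#SS(p)=\frac{p-1}{12}+O(1)$, with weights $w_i=\frac12\#\Aut(E_i)\in\{1,2,3\}$, and set $\mathcal{O}_i=\End(E_i)$, a maximal order in the quaternion algebra $B_{p,\infty}$. For $p$-fundamental $\Delta$, Deuring's theory (already invoked in the excerpt) identifies ${\red_p}_\star[S(\Delta)](\jbar_i)$ with the number of optimal embeddings $\Ocal_\Delta\hookrightarrow\mathcal{O}_i$, and these are counted, up to the bounded factor $w_i$ and primitivity, by the representation number $r_{L_i}(|\Delta|)$ of the \emph{Gross lattice} $L_i$: the trace-zero part of $\mathcal{O}_i$ with the reduced norm, a ternary form of determinant $\asymp p^2$, normalised so that discriminant-$\Delta$ embeddings correspond to vectors of norm $|\Delta|$. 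The equality $\sum_i{\red_p}_\star[S(\Delta)](\jbar_i)=h(\Delta)$ recorded in~\eqref{comparaison} is thereby recovered, and the Proposition becomes the statement $\max_i r_{L_i}(|\Delta|)=o(h(\Delta))$.

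Next I would package these numbers in the weight-$3/2$ theta series $\theta_{L_i}=\sum_n r_{L_i}(n)\,q^n$ and decompose $\theta_{L_i}=E_i+f_i$ into its Eisenstein and cuspidal parts. The Eisenstein coefficient is the genus average, a product of local densities; since $\Delta$ is $p$-fundamental the density at $p$ stays bounded, and by the mass formula the average multiplicity is $\asymp h(\Delta)/n\asymp h(\Delta)/p$. As $p\to\infty$ this term is already $o(h(\Delta))$, so the entire weight of the proof falls on bounding the cuspidal coefficient $a_{f_i}(|\Delta|)$ by $o(h(\Delta))=o(|\Delta|^{1/2-o(1)})$, \emph{uniformly in} $p$.

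For the cuspidal coefficient I would argue in two ranges. When $|\Delta|$ is large relative to $p$, I invoke the Iwaniec--Duke--Schulze-Pillot sub-convexity bound: through Waldspurger's formula relating $a_{f_i}(|\Delta|)^2$ to the central value $L(\tfrac12,\mathrm{Sh}(f_i)\otimes\chi_\Delta)$ of the Shimura lift, this yields a power saving $a_{f_i}(|\Delta|)\ll\|f_i\|\,|\Delta|^{1/2-\delta}$. To extract anything uniform one must control the dependence on $p$ hidden in the level of $\mathrm{Sh}(f_i)$ and in the Petersson norm $\|f_i\|$; here I would use Duke's Petersson-norm estimate for ternary theta series, extended in the appendix to the order $\Ocal_\Delta$ at hand, which bounds $\|f_i\|$ polynomially in $p$ and so makes the estimate decisive once $|\Delta|\gg p^{A}$ for a fixed $A$. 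In the complementary range $|\Delta|\ll p^{A}$ I would proceed geometrically: the Gross lattice has covolume $\asymp p$, so Hermite's bound on the successive minima forces any representation of the comparatively small integer $|\Delta|$ through the short vectors of $L_i$; pairing a short vector with any representing vector confines the count to a binary sublattice of controlled discriminant, on which Dirichlet's divisor-type bound for binary theta series gives $r_{L_i}(|\Delta|)\ll|\Delta|^{\epsilon}$. In either range $\max_i r_{L_i}(|\Delta|)=o(h(\Delta))$.

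The hard part will be the uniformity in the characteristic. The standard sub-convexity input is an $n$-aspect bound for a \emph{fixed} form, whereas here the Shimura lift has level growing with $p$, so the estimate degrades and is conclusive only for $|\Delta|$ polynomially large in $p$; symmetrically, the Hermite--Dirichlet argument is effective only while $|\Delta|$ stays below the covolume scale. The crux is to match the two thresholds — to push the elementary bound up to $|\Delta|\asymp p^{A}$ so that the two ranges overlap and leave no gap — and it is exactly this crossover that the Petersson-norm estimate, together with its extension in the appendix, is designed to control.
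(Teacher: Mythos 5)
Your overall architecture is exactly the paper's: Deuring's correspondence turns the multiplicity ${\red_p}_\star[S(\Delta)](\jbar)$ into a (primitive) representation number of the Gross lattice, the Eisenstein/genus term is identified with Gross's series and bounded by $24\,h(\Delta)/(p-1)=o(h(\Delta))$, the cuspidal coefficient is handled by Duke--Iwaniec--Schulze-Pillot subconvexity with Duke's polynomial-in-$p$ Petersson norm bound, and the complementary range is covered by an elementary Hermite-plus-Dirichlet argument, the two ranges being matched by a choice of exponents. Up to that level of description there is nothing to add.

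There is, however, one step that would fail as you describe it: your elementary bound in the range $\abs{\Delta}\ll p^{A}$. You claim that Hermite's bound ``confines the count to a binary sublattice of controlled discriminant'' and hence that $r_{L_i}(\abs{\Delta})\ll\abs{\Delta}^{\epsilon}$ throughout that range. That confinement only happens when $\abs{\Delta}$ is below the scale of the covolume of the rank-one quotient, roughly $\abs{\Delta}\ll p^{2/3}$; and the bound $\abs{\Delta}^{\epsilon}$ is simply false for, say, $\abs{\Delta}\asymp p^{3}$, where already the genus main term is of size $h(\Delta)/p\gg\abs{\Delta}^{1/6}$. Since subconvexity only becomes decisive for $\abs{\Delta}\gg p^{2N/\gamma}=p^{308}$, your two ranges would leave the enormous gap $p^{2/3}\ll\abs{\Delta}\ll p^{308}$ uncovered. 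The paper's slices method repairs exactly this: one fixes a primitive binary sublattice $\Pi$ of covolume $\leq\gamma_{2,3}\covol(Q)^{2/3}$, so that the rank-one quotient has covolume $\gg\covol(Q)^{1/3}\asymp p^{1/3}$, counts the $1+O(\sqrt{n}\,p^{-1/3})$ cosets of $\Pi$ meeting the sphere of radius $\sqrt{n}$, and bounds the contribution of each coset by $n^{o(1)}$ via a Dirichlet-type bound for integer-valued binary quadratic \emph{polynomials} (not just forms; this requires the separate Lemma~\ref{Integralite polynome quadratique} and Proposition~\ref{bound quadratic polynomials}). The resulting estimate is not $\abs{\Delta}^{\epsilon}$ but $\abs{\Delta}^{o(1)}+\abs{\Delta}^{1/2+o(1)}p^{-1/3}$, and it is the fixed power saving $p^{-1/3}$ in the characteristic --- not a power saving in $\abs{\Delta}$ --- that beats Siegel's $h(\Delta)=\abs{\Delta}^{1/2+o(1)}$ whenever $p\geq\abs{\Delta}^{6\epsilon}$, which does overlap with the subconvexity range. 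Without replacing your single-sublattice confinement by this coset count, the crossover you correctly identify as the crux cannot be achieved.
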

\subsection{}
Note that in the ordinary case, the $p$-fundamental discriminants are precisely those giving rise to unramified special~$0$-cycles. We have then 
\[\max_{\bar\jmath\in \overline{\F_p}}{\red_p}_\star[S(\Delta)](\bar\jmath)=1=o(h(\Delta)).\]
\emph{We will restrict our attention henceforth to supersingular special~$0$-cycles.}

\section{Approach using Quadratic forms}\label{section quatre}
 For every supersingular invariant~$\bar\jmath$ there is a ``Gross lattice''~$S_{\bar\jmath}$, which is a Euclidean lattice well defined up to isomorphism\footnote{Let~$E$ be an elliptic curve over~$\overline{\F_p}$ with modular invariant~$j(E)=\bar\jmath$. Then~$R=\End(E)$ is an order in a quaternion algebra, and we denote by~$R^0$ the sub-lattice of pure quaternions in~$R$. We have a reduced norm form, and~$S_{\bar\jmath}$ is obtained by its restriction to~$S\cap R^0$ where~$S$ is the order~$S=\Z+2R$.}. It is
   of rank~$3$, with co-volume~$\covol(S_{\bar\jmath})=4\sqrt{2}p$ (Hessian determinant~$D(p)=2(4p)^2$) and level~$4p$. 
 
 It has furthermore the property that for every primitive representation of a positive integer~$-\Delta$ one can attach an element~$\tilde\jmath$ in~$S(\Delta_{\text{$p$-f.}})$ with~$\red_p(\tilde\jmath)=\bar\jmath$ (Deuring's lift), and that every~$\tilde\jmath$ arises in this way (Deuring's reduction of endomorphisms). In particular one has the bound
\[
{\red_p}_\star[S(\Delta_{\text{$p$-f.}})](\bar\jmath)
\leq 
r'(\abs{\Delta},S_{\bar\jmath})
\leq 
r(\abs{\Delta},S_{\bar\jmath})
\]  
where~$r'(\abs{\Delta},S_{\bar\jmath})$, resp.~$r(\abs{\Delta},S_{\bar\jmath})$ is the number of \emph{primitive} representations, resp. of representations, of the integer~$\abs{\Delta}$ by the euclidean lattice~$S_{\bar\jmath}$.

We refer to~\cite{Gross} for a detailed treatment of this. This owes notably to the work of Brandt, Deuring, Eichler, and Gross. See also~\cite[Lem.\,3.2]{Elkies}, which considers solely fundamental discriminants. 

For our purposes, it is enough to establish a uniform bound
\begin{equation}\label{eq2}
r'(\abs{\Delta},S_{\bar\jmath})=o(h(\Delta))\text{, or the even stronger one }r(\abs{\Delta},S_{\bar\jmath})=o(h(\Delta))
\end{equation}
as~$(\Delta_{\text{$p$-f.}},p)\to(-\infty,+\infty)$.

We will consider two cases, according to whether~$\Delta$ or~$p$ is large compared to the other. 
\subsection{Dirichlet-Hermite bound}
A first result is not a deep one. We postpone its proof until the next section. We establish in proposition~\ref{Hermite Dirichlet bound} that there is some constant~$c_1<+\infty$ and exponent~$\kappa>0$ such that, for~$S\to\Z$ any positive definite integral ternary quadratic form of co-volume~$q$,
\[
r(n,S)\leq c_1\cdot \left(n^{o(1)}+ n^{\frac{1}{2}}q^{-\kappa}\cdot n^{o(1)}\cdot q^{o(1)}\right),
\]
in which the~$o(1)$ notation abbreviates explicit arithmetic functions and do not depend on~$S$.

Applied to~$S=S_{\bar\jmath}$, together with Siegel estimate, this gives us
\[
r(\abs{\Delta},S_{\bar\jmath})/h(\Delta)=O(1)\cdot\left(\abs{\Delta}^{-\frac{1}{2}+o(1)}+\abs{\Delta}^{o(1)}\cdot p^{-\kappa+o(1)}\right).
\]
Here the exponent~$o(1)$ of~$\Delta$ still does not depend on~$S$ but is ineffective: it relies on Siegel's theorem.

The asymptotic of the first term agrees with~\eqref{eq2}. As for the second term it agrees with~\eqref{eq2}, for any~$\eps>0$, provided
\[
\Delta=O(p^{\kappa/\eps})=o(p^{\kappa/o(1)}).
\]
which includes the range
\begin{equation}\label{range 1}
\log(p)\geq\frac{\eps}{\kappa}\log\abs{\Delta}.
\end{equation}
\subsection{A 	Conditional bound} Let us mention the work~\cite{Kane,KanePhD}, which produces an effective equidistribution bound,  \emph{assuming the GRH L-series of Dirichlet and of modular forms of weight~$2$}. Unfortunately, it is only proved \emph{for fundamental discriminants}.

By~\cite[Th. 1.9]{Kane}, there is a constant~$C(\eps)$ such that for all primes~$p$ and all~$\Delta\geq C(\eps)p^{14+\eps}$ a fundamental and $p$-supersingular discriminant,
\[
\#\red_p(S(\Delta))=\#SS(p)=\frac{p-1}{12}+O(1).
\]
This establishes \eqref{limit theorem} in the range
\[
\log(p)\leq \frac{1}{14+\eps}(\log(\Delta)-\log(C(\eps)).
\]
This suffices to, conditionally, establish our results, for fundamental discriminants.
 
The approach of the above-mentioned work is similar to the arguments below.

\subsection{Subconvexity bound}\label{section Duke}
As for the second result, it will involve both upper and lower bounds on representations numbers~$r(\abs{\Delta},S_{\bar\jmath})$,
though we ultimately only want upper bounds on \emph{primitive} representation numbers~$r'(\abs{\Delta},S_{\bar\jmath})$.

We first note that there is the following relation (we recall that~$\Delta\neq0$)
\[
r(\abs{\Delta},S_{\bar\jmath})=\sum_{\left.f^2\middle| \abs{\Delta}\right.}r'(\abs{\Delta}/f^2,S_{\bar\jmath}),
\]
to which we can apply the Möbius inversion
\begin{equation}\label{Moebius}
r'(\abs{\Delta},S_{\bar\jmath})=\sum_{\left.f^2\middle| \abs{\Delta}\right.}\mu(f) r(\abs{\Delta}/f^2,S_{\bar\jmath}),
\end{equation}
in which the sum has at most~
\begin{equation}\label{terms sigma bound}
\sigma_0(f(\Delta))=\abs{\Delta}^{o(1)}
\end{equation}
 terms (see \cite[\S13.10 p\,296]{Apostol} or \cite[p.6, footnote]{RichardJTNB}).
\subsubsection{}
We first use a result of~\cite{Duke ternary}: with~$N=11/2$ and~$\gamma=1/28$, and for \emph{square-free~$\Delta$}
\begin{equation}\label{Duke ternary}
r(\abs{\Delta},S_{\bar\jmath})=r(\abs{\Delta},gen(S_{\bar\jmath}))\cdot \frac{1}{M}+O(1)\cdot D(p)^{N+o(1)}\cdot \abs{\Delta}^{1/2-\gamma+o(1)}.
\end{equation}
Here~$r(\abs{\Delta},gen(S_{\bar\jmath}))$ is the number of all the representations of~$\abs{\Delta}$ by the representatives~$Q$ of genus of~$S_{\bar\jmath}$, weighted by the inverse of the number~$\abs{O(Q)}$ of automorphs, which only depends on~$p$. The coefficient~$M$, is the total  \emph{ma\ss} (measure, weight) of the genus (which seems to be~$M=(p-1)/48$, or~$(p-1)/24$ for the weights~$1/\abs{SO(Q)}$).
\paragraph{}This is again valid for any integral ternary quadratic form instead of~$S_{\bar\jmath}$. The bound is uniform with respect to the euclidean lattice~$S_{\bar\jmath}$, hence uniform in~$p$ in our setting.
\paragraph{}
Square-free integers cover odd fundamental discriminants. This is extended to all discriminant numbers with~\eqref{final bound}.

\subsubsection{}We apply Möbius inversion~\eqref{Moebius} to~\eqref{Duke ternary}. With \eqref{terms sigma bound} it yields
\begin{equation}\label{Duke ternary 2}
r'(\abs{\Delta},S_{\bar\jmath})=r'(\abs{\Delta},gen(S_{\bar\jmath}))\cdot \frac{1}{M}+O(1)\cdot D(p)^{N+o(1)}\cdot \abs{\Delta}^{1/2-\gamma+o(1)}.
\end{equation}
where~$r'(\abs{\Delta},gen(S_{\bar\jmath}))$ is the weighted sum of primitive representations.

We will see in \S\ref{Gross computation}
\begin{equation}\label{eq1}
r'(\abs{\Delta},gen(S_{\bar\jmath}))/M\leq 24\cdot h(\Delta)/({p-1})=o(h(\Delta))\text{ as }p\to\infty.
\end{equation}
The first term of~\eqref{Duke ternary 2}	which agrees with~\eqref{eq2}. The second term agrees with~\eqref{eq2} in the range
\[
p^2=O(\Delta^{\frac{\gamma}{N}-\eps'})=o(\Delta^{\frac{\gamma}{N}-o(1)}).
\]
which contains the domain
\begin{equation}\label{range 2}
\log(p)\leq\frac{1}{2}\left({\frac{\gamma}{N}-\eps'}\right)\log\abs{\Delta}.
\end{equation}
\subsection{}
By choosing~$\eps$ and~$\eps'$ so that~$\eps/\kappa<(\gamma/N-\eps')/2$, which is possible, we cover all cases with our both ranges~\eqref{range 1} and \eqref{range 2}. This ends the proofs.

\subsection{}\label{Gross computation}
 As remarked\footnote{We believe that only the equality of the ratios
\[
\frac{1}{M}\theta(gen(S_{\bar\jmath});q)=\frac{12}{p-1}G(q)
\]
does holds, not that of the numerators and denominators as implied in the reference~\cite[(3.3)]{Elkies}. The important part for us, their argumentation using the canonicity of the Eisenstein/cuspidal decomposition, remains unharmed.

We believe proportionality factor~$4$ is needed: compare~$(2-1)/12$ with the number~$48$ of automorphs of the corresponding Gross lattice. A factor~$2$ at least if one restrict to the~$24$ automorphs in~$SO(Q)$. 

The genus theta series is a sum over the type number of a quaternion algebra, and Gross series is a sum over the class number, roughly twice as many elements. It seems that supersingular elliptic curve with modular invariant over the prime field have twice as many automorphs in the corresponding group~$SO(Q)$ than automorphisms coming from unit of the quaternion order. A likely candidate for the extra automorph is conjugation by the Frobenius endomorphism.} in~\cite{Elkies}, we can equate the series~$\theta(gen(S_{\bar\jmath});q)=\sum r(\abs{\Delta},gen(S_{\bar\jmath}))\cdot q^{\abs{\Delta}}/M$  with a series~$\frac{12}{p-1}G(q)$ computed by Gross. We obtain, after translating from representations to primitive representations
\[
\frac{r'(\abs{\Delta_{\text{$p$-f.}}},gen(S_{\bar\jmath}))}{M}=\eps\cdot\frac{12}{p-1}\cdot h(\Delta_{\text{$p$-f.}})/u(\Delta_{\text{$p$-f.}})\text{ with }\eps
=
\begin{cases}
0\text{ if $\Delta$ is ordinary at~$p$,}\\
1\text{ if $\Delta$ once divisible by~$p$,}\\
2\text{ otherwise,}
\end{cases}
\]
for $p$-fundamental discriminants, where~$u(\Delta)=\abs{{\Ocal_{\Delta}}^\times/\Z^\times}\in\{1;2;3\}$. For discriminant divisible by~$p^2$ we have~$r'(\abs{\Delta_{\text{$p$-f.}}},gen(S_{\bar\jmath}))=0$. We conclude
\[
\frac{r'(\abs{\Delta_{\text{$p$-f.}}},gen(S_{\bar\jmath}))}{M}\leq 24\cdot h(\Delta_{\text{$p$-f.}})/(p-1)\leq24\cdot h(\Delta)/(p-1).
\]

\section{High characteristic Bound}\label{section last}
\subsection{The Bound}
Let~$Q$ be a positive definite, ternary, integral, quadratic form: that is we can write~\[Q=\sum_{1\leq i\leq j\leq 3} a_{i,j}x_ix_j\]
with integer coefficients~$(a_{i,j})_{1\leq i\leq j\leq 3}$, and the Hessian matrix~$A=(\partial_i\partial_j Q)_{1\leq i,j\leq 3}$, which is symmetric with integral entries and even diagonal, 
 is positive definite. The Hessian determinant of~$Q$ is denoted~$H(Q)=\det(A)$. The co-volume~$\covol(Q)$ of~$Q$ is~$\sqrt{H(Q)/8}$.

We wish to bound the number~$r(n,Q)$ of representations of an integer~$n$ by~$Q$, that is, of integer solutions to~$Q=n$.
To do this, we introduce the following variant
\[
\widetilde{\sigma_0}(n)=\max_{1\leq m\leq n}\sigma_0(m)=n^{o(1)}
\]
of the number of divisors function~$\sigma_0(n)=\sum_{0<d\mid n}d^0$. 
We find at~\cite[\S13.10 p.\,296]{Apostol} that
\[
\sigma_0(n)=\sum_{0<d\mid n}d^0\leq (\log(2)+o(1))\cdot\frac{\log(n)}{\log\log(n)},
\]
hence we have a the sub-polynomial growth behaviour 
\[
\widetilde{\sigma_0}(n)=n^{o(1)}.
\] 
\begin{proposition}\label{Hermite Dirichlet bound}
There is an exponent~$\kappa=1/6>0$, and a constant~$C=6$, such that for every positive definite integral ternary quadratic form~$Q$ of Hessian determinant~$q$,
\[
\forall n>0,
r(n,Q)\leq C\cdot\left(\sigma_0(n)+2{\sqrt{n}} \cdot \frac{1
}{(q/2)^{\kappa}}\widetilde{\sigma_0}((2q)^{2/3} n)\right)=n^{o(1)}+\sqrt{n}\cdot q^{-\kappa}\cdot (qn)^{o(1)}.
\]
\end{proposition}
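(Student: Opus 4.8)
The plan is to count directly the lattice points $v\in\Z^3$ with $Q(v)=n$ on the ellipsoid, to organise them along parallel lattice planes, and to pit the scarcity of planes (controlled by Hermite's bound) against the scarcity of points on each plane (controlled by Dirichlet's divisor bound for binary forms). Throughout I regard $\Z^3$ as a euclidean lattice $L$ for the inner product attached to $Q$, so that $Q(v)=\Nm{v}^2$ and $\covol(L)^2=q/8$. The first term $\sigma_0(n)$ and the second term will come from two genuinely different slices: the central homogeneous one, and the generic inhomogeneous ones.

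\emph{Hermite step.} I would take a Minkowski-reduced basis $e_1,e_2,e_3$ of $L$ realising the successive minima $m_1\le m_2\le m_3$, with $Q(e_i)=m_i$ (reduction attains the successive minima in rank $\le 4$). Hermite's inequality in rank $3$ ($\gamma_3=2^{1/3}$) gives $m_1\le \gamma_3\,\covol(L)^{2/3}$; projecting onto $e_1^{\perp}$ and applying Hermite in rank $2$ ($\gamma_2=2/\sqrt3$) to the projected lattice controls $m_2$, so that $m_1 m_2\le \gamma_2^2\gamma_3\,\covol(L)^{4/3}$. Equivalently the rank-$2$ sublattice $\Lambda=\langle e_1,e_2\rangle$ satisfies $\covol(\Lambda)=\sqrt{m_1 m_2-\langle e_1,e_2\rangle^2}\le\sqrt{m_1 m_2}\le C'\,\covol(L)^{2/3}$, and the induced binary form $Q_0=Q|_{\Lambda}$ has Hessian determinant $H(Q_0)\le (2q)^{2/3}$ up to the same absolute constant.

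\emph{Slicing step.} I would partition $L$ into the cosets $\Lambda+c\,e_3$, $c\in\Z$, and count on each separately. Writing $v=a e_1+b e_2+c e_3$ with $Q(v)=n$, the orthogonal projection of $v$ to $\Lambda^{\perp}$ has norm $\abs{c}\,h$, where $h=\covol(L)/\covol(\Lambda)$ is the spacing of consecutive planes; since this norm is at most $\Nm{v}=\sqrt n$, only
\[
\#\{c\}\ \le\ 2\sqrt n\,\frac{\covol(\Lambda)}{\covol(L)}+1\ \le\ 2\sqrt n\,(q/8)^{-1/6}+1
\]
planes can be non-empty. This is exactly where $\kappa=1/6$ is produced: slicing transversally to the \emph{longest} reduced vector $e_3$ is what makes the planes far apart.

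\emph{Dirichlet step and assembly.} It remains to bound the number of points on one plane. For $c=0$ the equation is the homogeneous binary equation $Q_0(a,b)=n$; since representations by a positive definite integral binary form correspond to integral ideals of the attached quadratic order, their number is at most $C\,\sigma_0(n)$, \emph{uniformly} in the form — this gives the first term. For $c\neq0$ the equation is inhomogeneous; completing the square recentres it to $Q_0(a-a_0,b-b_0)=N'$ with $N'\le n$ and $(a_0,b_0)$ of denominator dividing $H(Q_0)$, and clearing that denominator recasts the count as a representation number of an integer of size $O\big((2q)^{2/3}n\big)$ by an integral binary form, hence at most $C\,\widetilde{\sigma_0}\big((2q)^{2/3}n\big)$. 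Multiplying the plane count by the per-plane bound and collecting constants yields
\[
r(n,Q)\ \le\ C\Big(\sigma_0(n)+2\sqrt n\,(q/2)^{-1/6}\,\widetilde{\sigma_0}\big((2q)^{2/3}n\big)\Big),
\]
with $\kappa=1/6$ and $C=6$ after absorbing $\gamma_2,\gamma_3$ and the factor $8^{1/6}$. The step I expect to be delicate is the inhomogeneous Dirichlet bound: one must bound the representation number of a \emph{shifted} binary form uniformly in the form and the shift, and track that recentring inflates the argument of $\widetilde{\sigma_0}$ by no more than the stated power of $q$, controlled by $H(Q_0)\le(2q)^{2/3}$ and the divisor bound $\sigma_0(m)=m^{o(1)}$ with absolute implied constant. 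The geometry-of-numbers input (Hermite in ranks $2$ and $3$) is classical and only feeds in explicit constants.
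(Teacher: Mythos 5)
Your proof is correct and follows essentially the same route as the paper's: slice $\Z^3$ along a rank-two primitive sublattice of covolume $O(\covol(Q)^{2/3})$, bound the number of nonempty parallel slices by counting short vectors in the rank-one quotient lattice (which is exactly where $\kappa=1/6$ arises), apply the Dirichlet divisor bound to the central homogeneous binary form, and treat the off-centre slices by completing the square and clearing the denominator bounded by the binary discriminant --- this last step is precisely the paper's Proposition~\ref{bound quadratic polynomials}. The only divergence is cosmetic: you locate the slicing plane via a Minkowski-reduced basis and successive minima, whereas the paper invokes the Hermite--Rankin constant $\gamma_{2,3}=\gamma_3$ directly; this changes only the absolute constants absorbed into the $o(1)$ exponents.
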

An important point for us is the negative polynomial dependency on the determinant. 


\subsection{The Slices method}
For a sub-lattice~$\Pi$ of rank~$2$, 
let~$R=Q\restriction_\Pi$ be the restriction of~$Q$ to this sub-lattice. This is a positive definite binary integral quadratic form. Let~$\covol(R)$ its co-volume, let~$r=4\cdot\covol(R)^2$ be its Hessian determinant, and~$\Delta(R)=-r$ its discriminant.
We assume that~$\Pi$ is primitive and denote by~$S=Q\pmod\Pi$ the quotient euclidean lattice~$\Z^3/\Pi$.  We have then
\[
\covol(Q)=\covol(R)\cdot \covol(S).
\]

Each element~$s$ of~$S$ corresponds to a coset of~$\Pi$ in~$\Z^3$, a ``slice''. The restriction of~$Q$ on the real affine (hyper)surface through~$s$, written in any affine basis of~$s$, is a quadratic polynomial~$P_s$ of the kind studied in next section: it is positive, and integer valued on~$s$. We partition integer solutions of~$Q=n$ according to the slices and obtain
\[
r(Q,n)=\sum_{s\in S} r(P_s,n), \text{ where } r(P_s,n)=\#\{\lambda\in s|P_s(\lambda)=n\}.
\]

We bound the number of slices with non zero~$r(P_s,n)$. Every slice~$s$ containing a solution of~$Q=n$ will be, as an element of~$S$, of norm at most~$n$.
The number of elements of norm at most~$n$ in the rank one euclidean lattice~$S$ is~$1+2\left\lfloor\left.\sqrt{n}\middle/{} \covol(S)\right.\right\rfloor$ elements. The first term~$1$ corresponds to the origin coset~$s=\Pi$ of norm~$0$.

Let us bound the terms. We note that the quadratic part of~$P_s$ is determined by~$R$, and hence its discriminant is~$\Delta(R)=-4\cdot \covol(R)^2$.
For the coset~$s=\Pi$ we use the original Dirichlet bound~\eqref{Dirichlet bound}
\[
r(P_0,n)=r(R,n)\leq u(\Delta(R))\cdot{\sigma_0}(n).
\]
According to the proposition~\ref{bound quadratic polynomials}, the other terms are uniformly bounded by
\[r(P_s,n)\leq u(\Delta(R))\cdot\widetilde{\sigma_0}(\Delta(R)^2n).\]

Assembling these bounds yields
\begin{equation}\label{eq3}
\frac{r(Q,n)}{u(\Delta(R))}\leq {\sigma_0}(n)+2\left\lfloor\left.\frac{\sqrt{n}}{\covol(S)}\right.\right\rfloor\cdot \widetilde{\sigma_0}(\Delta(R)^2n)
\end{equation}
As the dependency on~${\Delta(R)}$ is inexpensive, we achieve most improvement by maximising~$\covol(S)$, or equivalently minimising~$\covol(R)$.

Hermite-Rankin's constant~$\gamma_{r,n}$ of order~$r$ and dimension~$n$ is such that for any Euclidean lattice~$L$ of rank~$n$ of co-volume~$1$ there is a sub-lattice~$M$ of rank~$r$ and co-volume at most~$\gamma_{r,n}$. Scaling by~$\lambda>0$, if~$L$ has co-volume~$\lambda^n$, then~$M$ has co-volume at most~$\gamma_{r,n}\lambda^r$. Here, for~$n=3$ and~$r=2$ we take~$L=(\Z^3,Q)$, then~$\lambda^3=\abs{\det(Q)}$, and~$M$ has co-volume at most
\[
\covol(M)\leq \gamma_{2,3}\cdot\covol(Q)^{2/3}.
\]
By some duality, the constant~$\gamma_{2,3}$ equals Hermite's constant~$\gamma_{1,3}=\gamma_3=\sqrt[3]{2}$ (cf.~\cite[\S10.6, and Table~14.4.1]{Martinet}, \cite[Th.~2, and p.\ 313]{Rankin})

We take~$\Pi=M$. We detail
\[
-\Delta(R)=4\covol(R)^2\leq4{\gamma_3}^2(\covol(Q)^2)^{2/3}=4{\gamma_3}^2(H(Q)/8)^{2/3}=(2H(Q))^{2/3},
\]
\[
\frac{1}{\covol(S)}=\frac{\covol(R)}{\covol(Q)}\leq\gamma_3\covol(Q)^{-1/3}=\left(2\middle/\sqrt{H(Q)/8}\right)^{1/3}=(H(Q)/2)^{-1/6},
\]
hence~$-\Delta(R)\leq\lfloor(2H(Q))^{2/3}\rfloor$. Substituting in~\eqref{eq3} we end up with
\[
r(Q,n)\leq u(\Delta(R))\cdot\left(\sigma_0(n)+ 2\cdot \lfloor  n^{1/2}(H(Q)/2)^{-1/6}\rfloor\cdot\widetilde{\sigma_0}\left({\lfloor (2H(Q))^{2/3}\rfloor}^2\cdot n\right)\right)
\]

\subsection{Quadratic polynomials -- A Dirichlet bound} Before it was generalised by Siegel, Dirichlet gave an exact formula for the total number~$r(n)$ of representations of an integer~$n$ 
by the genus of a positive definite integral \emph{binary} quadratic form (\cite[\S11.2 (11.9)]{Iwaniec}. 
 From his formula follows
\begin{equation}\label{Dirichlet bound}
r(n)\leq u\cdot \sigma_0(n)
\end{equation}
in which~$u\in\{2;4;6\}$ is the number of automorphs, and~$n>0$.

A fortiori, the number of representations by an individual form in the genus is bounded similarly.

We are interested in similar bounds for a more general form, that of a binary quadratic polynomial
\[P(x,y)=a x^2+bxy+cy^2+dx+ey+f,\]
which we assume to take positive values at real coordinates. Its quadratic part
\[
Q(x,y)=a x^2+bxy+cy^2
\]
is then definite positive. Our assumption involving the coefficients is that~$P$ takes integer values at integer coordinates; we will see a posteriori that the coefficients of~$P$ will need to be half integers, and a couple of them integers.

Our goal is the following.
\begin{proposition}\label{bound quadratic polynomials}
For every quadratic polynomial~$P$ as above with integer values at integer coordinates, positive definite quadratic part of discriminant~$\Delta$, and with positive or zero values at real coordinates,~$r(P,n)=\#\{(x,y)\in\Z^2\mid P(x,y)=n\}$ satisfies
%
\[
r(P,n)\leq u(\Delta)\widetilde{\sigma_0}(\Delta^2n)=(n\Delta)^{o(1)},
\]
where~$u(\Delta)=\#{O(Q)}=\#\Z[(\Delta+\sqrt{\Delta})/2]^\times=
\begin{cases}
6\text{ if }\Delta=-3,\\
4\text{ if }\Delta=-4,\\
2\text{ otherwise,}
\end{cases}$ is the number of automorphs of~$Q$.
\end{proposition}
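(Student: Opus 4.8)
The plan is to remove the linear part of $P$ by translating to its center, then clear denominators so as to recognise $r(P,n)$ as bounded by a genuine representation number of a positive definite binary quadratic form, to which the Dirichlet bound \eqref{Dirichlet bound} applies.

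First, since $P$ takes non-negative real values and its quadratic part $Q$ is positive definite, $P$ attains a minimum $m=\min_{\mathbf{R}^2}P\ge 0$ at a unique critical point $(x_0,y_0)$, and $P(x,y)=Q(x-x_0,y-y_0)+m$. Solving $\partial_xP=\partial_yP=0$ shows $(x_0,y_0)$ is rational with denominators controlled by $-\Delta=\det\begin{pmatrix}2a&b\\ b&2c\end{pmatrix}$; explicitly $2ax_0+by_0=-d$ and $2Dy_0=-k$, where I write $D=-\Delta=\abs{\Delta}$ and $k=4ae-2bd$. Each solution of $P=n$ gives $Q(x-x_0,y-y_0)=n-m$, and the identity $4aQ(\xi,\eta)=(2a\xi+b\eta)^2+D\eta^2$ followed by a multiplication by $4D$ turns this into $DU'^2+W^2=M$, where I set $U'=4ax+2by+2d$ and $W=2Dy+k$ and where $M=16aD(n-m)$.

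The two linear forms above are integer-valued (this is where the half-integrality of $d,e,f$ enters, making $2d$ and $k$ integers), and the affine map $(x,y)\mapsto(U',W)$ is injective because its linear part has determinant $8aD\neq 0$. Hence $r(P,n)$ is at most the number of integer representations of $M$ by the positive definite binary form $G(U',W)=DU'^2+W^2$, whose discriminant is $4\Delta\le -12$, so that its number of automorphs is $\#O(G)=2\le u(\Delta)$. The Dirichlet bound \eqref{Dirichlet bound}, which a fortiori bounds representations by a single form of the genus, then gives $r(P,n)\le 2\,\sigma_0(M)\le u(\Delta)\,\widetilde{\sigma_0}(M)$.

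It remains to control the size of $M$. Crucially, since $m\ge 0$, we have $M=16aD(n-m)\le 16aDn$, which is polynomial: after an $\mathrm{SL}_2(\Z)$-reduction of $Q$ — which merely permutes the solutions and leaves $\Delta$, $u(\Delta)$ and integrality untouched — one has $a\le\sqrt{D/3}$, whence $M\le \tfrac{16}{\sqrt{3}}D^{3/2}n\le D^2n=\Delta^2 n$ for $\abs{\Delta}$ large, so by monotonicity $\widetilde{\sigma_0}(M)\le\widetilde{\sigma_0}(\Delta^2n)$; the finitely many small discriminants are a direct check, and in every case the resulting bound is $(n\Delta)^{o(1)}$. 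I expect the main obstacle to be exactly this non-homogeneous feature: the center is only rational, so clearing its denominator is what forces the factor $\Delta^2$ inside $\widetilde{\sigma_0}$ and produces the auxiliary form $G$ of discriminant $4\Delta$ in place of $Q$. Keeping $M$ polynomially bounded in $\Delta$ (via $m\ge 0$ and Hermite reduction) and verifying that the automorph count stays $\le u(\Delta)$ are the delicate points.
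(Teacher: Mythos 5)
Your proof is correct in its core mechanism and follows the same overall strategy as the paper: translate to the real minimum of~$P$, exploit~$m=\min_{\R^2}P\geq0$ so that the relevant representation number is evaluated at an integer of size~$O(n\cdot\abs{\Delta}^{O(1)})$, and conclude with the Dirichlet bound~\eqref{Dirichlet bound}. Where you diverge is in the denominator-clearing step. The paper observes, via Lemma~\ref{Integralite polynome quadratique}, that the center~$(\lambda,\mu)$ lies in~$\left(\frac{1}{\Delta}\Z\right)^2$, so the solutions of~$P=n$ sit inside the solutions of~$Q=n-m$ on the lattice~$\frac{1}{\Delta}\Z^2$; rescaling by~$\Delta$ keeps the \emph{same} form~$Q$, hence the same automorph count~$u(\Delta)$, and puts exactly~$\Delta^2(n-m)\leq\Delta^2 n$ inside~$\sigma_0$. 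You instead diagonalise by hand, $16aD\,Q=DU'^2+W^2$, and inject the solutions into representations of~$M=16aD(n-m)$ by the auxiliary form~$DU^2+W^2$ of discriminant~$4\Delta$. This buys you a clean automorph count of~$2$, but costs you the extraneous factor~$16a$, which you must then tame by Gauss--Hermite reduction.

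That cost is where the one genuine (though minor) gap sits. Your inequality~$16a\leq D$, i.e.~$16\sqrt{D/3}\leq D$, only holds for~$D\gtrsim 86$; for smaller~$\abs{\Delta}$ you have~$M\leq 16\sqrt{D/3}\,Dn$, which can exceed~$\Delta^2n$ (e.g.~$\Delta=-3$ gives~$M\leq48n$ versus~$9n$), and since~$\widetilde{\sigma_0}$ is only known to be nondecreasing, $\widetilde{\sigma_0}(M)\leq\widetilde{\sigma_0}(\Delta^2n)$ does not follow; nor is~$2\sigma_0(M)\leq u(\Delta)\widetilde{\sigma_0}(\Delta^2 n)$ a ``direct check'' there, as~$\sigma_0$ is wildly non-monotone. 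So as written you do not recover the proposition's exact constant for the finitely many small discriminants. This is harmless for the application --- your bound is~$2\,\widetilde{\sigma_0}\!\left(\frac{16}{\sqrt3}\abs{\Delta}^{3/2}n\right)=(n\Delta)^{o(1)}$ in all cases, which is all that is used downstream --- but to prove the statement verbatim you should either enlarge the argument of~$\widetilde{\sigma_0}$ in the statement, or switch to the paper's rescaling of the shifted lattice, which avoids the coefficient~$a$ altogether.
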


NB: the representation numbers~$r(P,n)$ are coefficients of a theta series \cite{Siegel63} \cite[\S10.3]{Iwaniec}, which satisfies transformations of a modular form of weight~$1$, for which one has Ramanujan-Petersson bounds \cite[Corollaire 4.2]{Serre-Deligne}. This yields the correct asymptotic as~$n$ diverges. But, for our purpose, we want, for modular forms arising as such theta series, uniformity results in terms of~$P$.
\subsubsection{}
We first prove our claim on the coefficients of~$P$.
\begin{lemme}[See {\cite[p.\,100/116, $\alpha)$ (2)]{Siegel63}}]\label{Integralite polynome quadratique} 
Let~$P(x,y)=a x^2+bxy+cy^2+dx+ey+f\in\R[x,y]$ be such that it takes with integer values on~$\Z^2$. 
Then~$a$, $b$, $c$, $d$, $e$ and~$f$ belong to~$\frac{1}{2}\Z$. Moreover~$b,f\in\Z$. 
\end{lemme}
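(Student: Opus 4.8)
The plan is to recover each coefficient of $P$ as an explicit $\Z$-linear combination of finitely many values $P(m,n)$ with $(m,n)\in\Z^2$, each of which is an integer by hypothesis. Equivalently, I would compute the first- and second-order finite differences of $P$, which for a quadratic polynomial are constants expressible through its coefficients, and read off the integrality constraints from them.

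First, evaluating at the origin gives $f=P(0,0)\in\Z$, which already settles the integrality of $f$. Next, along the $x$-axis the symmetric and antisymmetric second differences isolate $a$ and $d$: since $P(\pm1,0)=a\pm d+f$, one has
\[
2a=P(1,0)+P(-1,0)-2P(0,0)\in\Z,\qquad 2d=P(1,0)-P(-1,0)\in\Z,
\]
so $a,d\in\tfrac12\Z$. Exchanging the roles of $x$ and $y$ and using $P(0,\pm1)=c\pm e+f$ yields in the same way $2c=P(0,1)+P(0,-1)-2P(0,0)$ and $2e=P(0,1)-P(0,-1)$, hence $c,e\in\tfrac12\Z$.

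It remains to handle the cross coefficient $b$, where the mixed second difference does the job: from $P(1,1)=a+b+c+d+e+f$ together with the values already used,
\[
b=P(1,1)-P(1,0)-P(0,1)+P(0,0)\in\Z,
\]
so in fact $b\in\Z$, the stronger assertion. Collecting these, all six coefficients lie in $\tfrac12\Z$ while $b$ and $f$ are genuine integers, as claimed. I do not expect any real obstacle here: it is a routine finite-difference computation. The only point worth watching is that the mixed difference produces $b$ rather than $2b$ — the factor $2$ appears only on the diagonal of the Hessian — which is exactly why $b$, unlike $a$ and $c$, is forced to be integral.
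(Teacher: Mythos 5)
Your proof is correct and follows essentially the same route as the paper's: both recover the coefficients from second-order finite differences of $P$ at integer points (the paper uses forward differences $\delta_x,\delta_y$ evaluated at the origin, you use symmetric and mixed differences at $0,\pm1$), and both isolate $2a,2c,2d,2e\in\Z$ together with the genuinely integral $b$ and $f$. No gap; the observation that the mixed difference yields $b$ rather than $2b$ is exactly the point the paper's proof makes via $\delta_y\delta_xP(0,0)=b$.
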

\begin{proof}Evaluating at the origin we find~\[f=P(0,0)\in\Z.\]
The affine form
\[
\delta_xP=
P(x+1,y)-P(x,y)=2ax+a+by+d
\]
takes integer values. on~$\Z^2$ Evaluating at the origin we find
\[
a+d=\delta_xP(0,0)\in\Z.
\]
Taking differences again, and evaluating at the origin, yields
\begin{align*}
\delta_x\delta_xP(0,0)&=2a\in\Z,\\
\delta_y\delta_xP(0,0)&=b\in\Z.
\end{align*}
We deduce
\[
2d\in\Z-2a=\Z.
\]
Similarly, arguing with~$\delta_y$,
\[
2c,2e\in\Z.\qedhere
\]
\end{proof}
\paragraph*{Example} The triangular numbers
\[
\frac{x^2}{2}+\frac{x}{2}=\frac{x(x+1)}{2}={x+1 \choose 2}
\]
are known to be integers for integers values of~$x$. Nevertheless the coefficients aren't integers but half-integers. As for
\[{x+y \choose 2}=\frac{(x+y)(x+y-1)}{2}=\frac{x^2}{2}+xy+\frac{y^2}{2}-\frac{x}{2}-\frac{y}{2}\]
its~$b$ coefficient is integral.
\subsubsection{}
We now turn to the proof of our proposition. This is a reduction to Dirichlet bounds for the quadratic part~$Q$ of~$P$, by translating the origin at the rational point where~$P$ is minimal, and rescaling to get rid of denominators. 
The proof here does not seek optimal bounds (notably the dependency upon~$\Delta$), as Dirichlet bounds are sufficient for us.
\begin{proof}
Let~$Q(x,y)=ax^2+bxy+cy^2$ be the quadratic part of~$P(x,y)$.

It is associated to the symmetric bilinear form 
\[B((x,y),(z,t))=axz+b(xt+yz)/2+cyt=(az+bt/2)x+(bz/2+ct)y\]

As~$Q$ is assumed to be positive definite,~$(ac-b^2/4)$ is non zero.
We write 
\[2(\lambda,\mu)=(cd-be/2,-bd/2+ae)/(ac-b^2/4),\] so that we have
\[
a x^2+bxy+cy^2+dx+ey+f=Q(x,y)+2B((x,y),(\lambda,\mu))+f.
\]
We complete the square
\[
a x^2+bxy+cy^2+dx+ey+f=Q((x,y)+(\lambda,\mu))+f-Q(\lambda,\mu).
\]
The minimum of~$P$ on~$\R^2$ is~$m=f-Q(\lambda,\mu)\geq0$ at~$(x,y)+(\lambda,\mu)=0$. 

Let~$\Delta=b^2-4ac$. Using lemma~\ref{Integralite polynome quadratique} we note that~$(\lambda,\mu)\in\left(\frac{1}{\Delta}\Z\right)^2$. 

We are interested in the number of solutions~$r(P,n)$, in~$\Z^2$, of
\[
P(x,y)=n.
\]
This~$r(P,n)$ is the number of solutions, in~$\Z^2+(\lambda,\mu)$, of
\[
Q=n-f+Q(\lambda,\mu)=n-m.
\]
As we have~$\Z^2+(\lambda,\mu)\subseteq\left(\frac{1}{\Delta}\Z\right)^2$, we may bound
\begin{multline}
r(P,n)=\#\{(x,y)\in\Z^2+(\lambda,\mu)\mid Q(x,y)=n-m\}\\\leq\#\{(x,y)\in\left(\frac{1}{\Delta}\Z\right)^2\mid Q(x,y)=n-m\}.
\end{multline}
Scaling, the later becomes the set of solutions in~$\Z^2$ to
\[
Q=\Delta^2(n-f+Q(\lambda,\mu))=\Delta^2(n-m).
\]
We conclude with Dirichlet bound~\eqref{Dirichlet bound}
\[
r(P,n)\leq r(Q,\Delta^2(n-m))\leq u(\Delta)\sigma_0(\Delta^2\cdot (n-m))\leq u(\Delta)\widetilde{\sigma_0}(\Delta^2n).\qedhere
\]
\end{proof}
\appendix
\section{}
We return to the Gross lattice~$S=S_{\bar\jmath}$ (see~\ref{Gross lattice}, \cite[(12.7)]{Gross}), which is positive definite, and consider its theta series, written as a series indexed by negative discriminant numbers~$\Delta$, 
\begin{equation}\label{theta}
\theta=\theta(S;q):=\sum_{b\in S}q^{\Nmred(b)}= 1+\sum_\Delta r(\abs{\Delta},S) q^\abs{\Delta}.
\end{equation}
This is the Fourier series of a (holomorphic) modular form in the space~$M_{k}(\Gamma_1(N))$ of integer and a half weight~$k=3/2$, and of level~$N=4p$. 
The theory of half integer weight modular forms owes to~\cite{Shimura} (See~\cite{Koblitz} or~\cite{Cohen} for a treatment.) We refer to these for definitions, and the finer notion of modular form with character~$\chi$. 

See the case~$n=k=3$ of~\cite[Corollary 14.3.24]{Cohen} for the invariants~$k,N,\chi$ of~$\theta$ in terms of the invariants~$H(S),N(S)$ of~$S$. 
We compute the latter in \S\ref{Invariants Gross} below.

We remark that~$N/4=p$ is square free: we can apply the theory of~\cite{Kohnen}, and the remark of~\cite[Prop. 3.1.5]{Soma PhD} about absence of unary theta series (see also~\cite[p.\,312]{SurveySP} in terms of unicity of spinor genus.)
\subsection{Invariants of Gross' lattice}\label{Invariants Gross}
Let us recall Gross' construction of~$S=S_{\bar\jmath}$ associated with~$\bar\jmath$. 
\subsubsection{}\label{Gross lattice} We let~$B$ be a \emph{Brandt algebra} (a quaternion algebra over~$\Q$) whose \emph{reduced discriminant}~\cite[D\'{e}f. p\,58-59]{Vigneras} is the ideal~$d_B=(p)\subseteq\Z$. Then, \cite{Deuring},  the endomorphism ring of an elliptic curve~$E$ over~$\overline{\F_p}$ with~$j(E)=\bar\jmath$ is isomorphic to a maximal order~$R$ of~$B$.

We denote the canonical anti-involution~$b\mapsto \overline{b}$, the reduced norm~$\Nmred (b)=b\cdot\overline{b}\in\Z$ (the normalisation~$\Nmred(b)=-b\cdot\overline{b}$ is also found) and the reduced trace~$\Tr(b)=b+\overline{b}\in\Z$.

Then~$S$ is the sub-lattice, of rank~$3$, described by the \emph{pure quaternions} (a.k.a. traceless elements) in the order~$\Z+2R$, endowed with the restriction~$\Nmred\restriction_S:S\to\Z$ of the \emph{reduced norm form} (i.e. the reduced norm as a quadratic form).\footnote{
An equivalent quadratic lattice, up to sign, is the \emph{reduced discriminant form}~$\Delta=\Tr^2-4\Nmred$ on the quotient lattice~$R/\Z$ (not to be confused with the reduced discriminant~$d_B$ of~$B$ or~$d(R)$ of~$R$.)
}

\subsubsection{}Our aim is to recall how to compute two invariants of~$S$, for which references were not found: its \emph{Hessian determinant}~$H(S)$ and its \emph{level} \footnote{This is not to be confused with the level (as an Eichler order) considered in~\cite[III, \S5 p.\,84]{Vigneras}, though it is related.}~$N(S)$ \cite[14.3.15]{Cohen}. 
We will check 
\begin{equation}\label{Appendix1}
H(S)=+32p^2\qquad N(S)=4p,
\end{equation}
locally: firstly that~$H(S), N(S)>0$, and secondly that, for every prime~$\ell$,  we have, as ideals of~$\Z_\ell$,
\begin{equation}
H(S)\Z_\ell=32p^2\Z_\ell\text{ and }N(S)\Z_\ell=4p\Z_\ell.
\end{equation}
We rely on~\cite{Cohen} for some interpretation of these invariants and~\cite{Vigneras} for computations of related invariants. Of note, a forthcoming good reference is~\cite{Voight}.

\subsubsection{Archimedean prime}
As~$B$ is definite, the norm form on~$S$ is positive definite and its Hessian determinant~$H(S)$ is positive. The level is positive by convention.

\subsubsection{Reduced discriminant of~$R$}
As~$\overline{b}=\Tr(b)-b\in\Z+b\Z$, any order~$R=\overline{R}$ is invariant under the anti-involution. In particular it has the same dual~$R^\sharp=\{b\in B|\Tr(bR)=\Tr(b\overline{R})\subseteq \Z\}$ with respect to the two bilinear forms
\begin{equation}\label{trace form}
\beta_1(b,b')=\Tr(bb')\qquad\beta_2(b,b')=\Tr(b\overline{b'})=\Nmred(b+b')-\Nmred(b)-\Nmred(b').
\end{equation}

The \emph{reduced discriminant~$d(R)$ of~$R$}, defined in~\cite[D\'ef. p.\,24]{Vigneras}, is the ideal of~$\Z$ such that, as fractional ideals,
\begin{equation}\label{App 3}
d(R)\cdot  (\Nmred(R^\sharp))=(1)=\Z\subseteq\Q.
\end{equation}
As~$R$ is maximal,~$d(R)$ is again the ideal~$d_B=(p)$, by~\cite[III Cor. 5.3]{Vigneras}. Moreover the Gram determinant~$G(\beta_1)$ of the reduced trace form~$\beta_1$ on~$R$ satisfies~$G(\beta_1)\cdot\Z=(p)^2$ \cite[I, Lem.~4.7\,(3) (see also comments at end of the proof)]{Vigneras}. Another interpretation of this Gram determinant is, see \cite[Lem. 14.3.2]{Cohen},
\begin{equation}\label{App 5}
\abs{G(\beta_1)}=\abs{G(\beta_2)}=[R^\sharp:R].
\end{equation}
(Concerning the sign we actually have~$-G(\beta_1)=G(\beta_2)>0$.) 

Let~$S^\sharp=\{b\in \Q\cdot S|\Tr(b\overline{S})\subseteq\Z\}$.
With definition~\cite[14.3.15]{Cohen}, the level of~$S$ satisfies
\begin{equation}\label{App 4}
(N(S))\cdot (\Nmred(S^\sharp))=(1)=\Z\subseteq\Q.
\end{equation}
As for the Hessian determinant we have, by~\cite[Lem. 14.3.2]{Cohen},
\begin{equation}\label{App 6}
\abs{H(S)}=[S^\sharp:S]
\end{equation}
We will compare the two formulas~\eqref{App 3} and~\eqref{App 5} with~\eqref{App 4}  and~\eqref{App 6} at odd places. 

Also, the centre~$\Z$ has dual~
\(
\Z^\sharp=\{b\in \Q|\Tr(b\Z)\subseteq\Z\}=\frac{1}{2}\Z,
\)
has Hessian matrix~$(2)$ of determinant~$H(\Z)=2$, and has level~$N(\Z)$, such that~$(N(\Z))^{-1}=(\Nmred(\Z^\sharp))=(1/4)$.

\subsubsection{Odd Places} Let~$R_\ell=R\tens\Z_\ell$ and~$S_\ell=S\tens \Z_\ell$. Assume~$2$ is invertible in~$\Z_\ell$. We have
\[
\Z_\ell+2 R_\ell=R_\ell
\]
and a decomposition
\[
b\mapsto((b+\overline{b})/2,(b-\overline{b})/2):R_\ell\simeq \Z_\ell\oplus S_\ell
\]
which is orthogonal with respect to the bilinear forms~\eqref{trace form}.
On the center~$\Z$ these bilinear forms are equal, whereas the two are opposite when restricted to pure quaternions. 

It follows duals can be computed component-wise:~$R^\sharp\tens\Z_\ell=\Z^\sharp\tens\Z_\ell\oplus S^\sharp\tens\Z_\ell$. 

 As fractional ideals of~$\Z_\ell$, we have~$(\Nmred(\Z_\ell))=4\Z_\ell=(1)$ and  
\[
(N(S))^{-1}=(N(S))^{-1}+(1)=(\Nmred(S^\sharp))+ (\Nmred(\Z^\sharp))=(\Nmred(R^\sharp))=d(R)^{-1}.
\]
Hence~$N(S)\Z_\ell=p\Z_\ell=4p\Z_\ell$.

 The Hessian determinant of an orthogonal sum is the product of the Hessian determinants. Moreover~$H(S)=\abs{H(S)}=[S_\ell^\sharp:S_\ell]$ by~\cite[Lem. 14.3.2]{Cohen}.
 As ideals of~$\Z_\ell$, we have
\[
H(S)\Z_\ell=
([S_\ell^\sharp:S_\ell])=([\Z_\ell^\sharp:\Z][S_\ell^\sharp:S_\ell])=([R_\ell^\sharp:R_\ell])=(\abs{G(\beta_1)})=p^2\Z_\ell.
\]
It follows
\[
H(S)\Z_\ell=p^2\Z_\ell=32p^2\Z_\ell.
\]

\subsubsection{Even, finite, place}We end up with a direct computation at the prime~$\ell=2$.

We recall that the local models of~$R$ are the matrix algebras~$\End({\Z_\ell}^2)$ at primes~$\ell\neq p$, and given by~\cite[II \S1 Th.\,1.3]{Vigneras} at~$p$. In particular, at~$\ell=p=2$, it is the local model of the order of Hurwitz quaternions. As both~$H(S)$ and~$N(S)$ can be computed locally, it will suffice to consider these two examples.

We compute directly with these two examples: the Hurwitz quaternions with~$d_B=(2)$ and the matrix algebra~$\End(\Z^2)$ with~$d_B=\Z$. Recall the reduced norm forms
\[t+ix+jy+zk\mapsto t^2+x^2+y^2+z^2\text{ and~}\left(\begin{smallmatrix}a&b\\c&d\end{smallmatrix}\right)\mapsto ad-bc.\] The Hessian discriminant and the level can be computed on the Hessian matrix of these forms in any explicit basis.  A basis of~$S$ is~$2i,2j,i+j+k$ and~$\left(\begin{smallmatrix}0&2\\0&0\end{smallmatrix}\right),\left(\begin{smallmatrix}1&0\\0&-1\end{smallmatrix}\right),\left(\begin{smallmatrix}0&0\\2&0\end{smallmatrix}\right)$. Corresponding quadratic forms~$(2x+z)^2+(2y+z)^2+2z^2$ and~$y^2-4(xz)$ with Hessian matrices
\[
\left(
\begin{smallmatrix}
8&&4\\
&8&4\\
4&4&6
\end{smallmatrix}
\right)
=8\cdot
\left(
\begin{smallmatrix}
4&1&-2\\
1&4&-2\\
-2&-2&2
\end{smallmatrix}
\right)^{-1}\quad\text{and}\quad
\left(
\begin{smallmatrix}
2&&\\
&&4\\
&4&
\end{smallmatrix}
\right)
=4\cdot
\left(
\begin{smallmatrix}
2&&\\
&&1\\
&1&
\end{smallmatrix}
\right)^{-1}.
\]
We gather the Hessian determinants~$H(S)=128=+32\cdot p^2$ and\footnote{We have the minus sign as the matrix algebra is indefinite.}~$H(S)=-32=-32\cdot 1^2$, and the levels~$N(S)=8=4\cdot p$ and~$N(S)=4\cdot 1$.

\subsubsection{Evidence}We end with some numerical evidence. (One can verify in the documentation of the commands below, that these deal with Gross' lattice. N.B.: The invariants which are local in nature do not depend on the choice of a particular maximal order.) We picked~$p=163$.
\begin{quote}{\small
\begin{verbatim}
sage: version()
    'SageMath version 8.2, Release Date: 2018-05-05'
sage: Q=BrandtModule(163).maximal_order().ternary_quadratic_form()
sage: Q=QuadraticForm(ZZ,Q.Hessian_matrix())
sage: Q.det().factor()
    2^5 * 163^2
sage: Q.level().factor()
    2^2 * 163
\end{verbatim}}
\end{quote}

\subsection{General discriminant numbers}Here we obtain bounds similar to~\eqref{Duke ternary}, but including fundamental discriminants which are even, and then discriminants which are not fundamental. The references we use for the odd fundamental discriminants (square-free discriminants) might work for all fundamental discriminants  instead of merely square-free numbers, but the part of the proofs which should imply this do not provide enough details to ensure the uniformity we need, and furthermore some limitations on the conductor are present. 
\subsubsection{}
We consider the operator~$T_4$ acting on~$ M_k(\Gamma_1(N))$, which is given on~$q$-expansion by
\[
\sum_{n\geq0} a_n q^n\mapsto\sum_{n\geq0} a_{4n}q^n,
\]
and on half-period ratio coordinate~$\tau$ by
\[
T_4(f)(\tau)=\sum_{0\leq b<4}f\left(\frac{\tau+b}{4}\right).
\]
This is also the square~${U_2}^2$ of the ``U operator'' often denoted~$U_2$. A construction of~$T_4$, in the half-integral weight context, and that it preserves weight, level (and character), is explained in \cite[IV \S3 Problems 3, 5]{Koblitz}, see also~\cite[Prop. 1.3-5]{Shimura} which includes non square levels,  and that it preserves the cuspidal subspace~$S_k(\Gamma_1(N))$ is used in \cite{Soma}, or see the original reference~\cite[Prop. 1.3]{Shimura}.

\subsubsection{}

We let~$f=\sum_{n\geq0}a_n(f)$ be the cuspidal part of our series~$\theta$ from~\eqref{theta}, which we decompose, as in~\cite[(1.1)]{Kane},
\[
f=\sum_{i\in I} b_i\cdot g_i
\]
as a linear combination of orthogonal simultaneous eigenforms~$g_i$ for the anemic Hecke operators. We choose the~$g_i$ to be analytically normalised, that is such that the~$g_i$ have Petersson norm~$1$. \emph{The normalisation we use for the Petersson norm~$\Nm{\placeholder}$ is $\Nm{\placeholder}_{M_k(\widetilde{\Gamma_1(N)})}$ from~\eqref{Petersson norm}, with~$\widetilde{\Gamma_1(N)}$ as denoted in~\cite[IV \S3, p.\,182]{Koblitz}, which is the one used by~\cite{Iwaniec Fourier,Duke hyperbolic,Duke ternary}.} By orthogonality of the~$g_i$,
\begin{equation}\label{orthogonality}
\Nm{f}^2=\sum_{i\in I} \abs{b_i}^2.
\end{equation}

\subsubsection{Petersson norm bound}
We now use\footnote{After correcting the~$\eps=2+2\eps$ and the~$\abs{a(n)}\leq n^{1/2}D^{3/2}(nD)^\eps\neq N^{1/2}D^{3/2}(nD)^\eps$.} \cite[end of \S3]{Duke ternary} to bound for the Petersson norm~$\Nm{f}$. Here~$D=32p^2$ and~$N=4p$. 
There are constants~$\eps',\eps''=8\eps',C_1(\eps'),C_2(\eps'')=2^{23+28\eps'}C_1(\eps')>0$ such that
\begin{equation}\label{Duke bound}
\Nm{f}^2\leq C_1(\eps')\cdot N^4D^3(ND)^{4\eps'}=C_2(\eps'')\cdot p^{10+\eps''}.
\end{equation}
This bound is uniform in~$S$, in particular in~$p$.  In our case, we might improve the polynomial exponent with the sharper bound~$M(G)\approx\frac{p-1}{24}$ instead of~$M(G)\ll D^2$ in~\cite[after (8)]{Duke ternary}. The polynomial growth is all that matters for our main problem.
\subsubsection{Including fundamental discriminants which are even}

Let~$\phi=\sum_{n\geq0}a_n(\phi)\in S_{k}(\Gamma_1(N))$ be one of the~$g_i$ or of the~$T_4(g_i)$. In the latter case we
have a variant of Hecke bound from lemma~\ref{Hecke bound}, which, with~$C_4=4^{3/4}\geq 1$,  gives us
\(
\Nm{\phi}=\Nm{T_4(g_i)}\leq C_4\Nm{g_i}=C_4
\). In any case
\[
\Nm{\phi}\leq C_4.
\]

We can apply  Duke's results~\cite{Duke hyperbolic} to~$\phi$, in the form given by~\cite[Lemma 2]{DSP}, in the case of square free~$n=t=t'$: with~$n_0=1$ and~$S=\{\}$, in their notations. For square-free~$n$ we do not need the hypothesis on the Shimura lift, as is seen in their proof; yet this hypothesis actually holds in our case: in~$S_{k}(4p)$ the subspace spanned by unary th{e}ta series \cite[\S2]{Soma IJNT}, is zero, \cite[Prop. 3.1.5 (Kohnen)]{Soma PhD}. 

The results invoked  above give the domination, for some constant~$C_3(\eps)$ depending only on~$\eps>0$ (and not on our quadratic module~$S$, or the choice of~$\phi$), for all square-free~$n$,
\begin{equation}\label{DukeIwaniec}
\abs{a_n(\phi)}\leq C_3(\eps)\cdot \Nm{\phi} \cdot n^{1/2-\gamma+\eps}\leq C_4\cdot C_3(\eps)\cdot n^{1/2-\gamma+\eps}
\end{equation}
with~$0<\eps<\gamma=-1/28$. The Petersson norm is not defined, hence not explicitly normalised, in the reference~\cite{DSP}. From the proof it seems to be the normalisation of~\cite{Iwaniec Fourier,Duke hyperbolic,Duke ternary}, or at least the proof seems to work with such normalisation. Most other used normalisations \emph{involve a polynomial dependency of~$C(\eps)$ on the level~$N$}, which is acceptable for our application in this article.

We know focus on one of the~$g_i$, and pick~$-\Delta$ for a negative fundamental discriminant~$\Delta$. 
If~$n=-\Delta$ is square-free, we apply~\eqref{DukeIwaniec} for~$\phi=g_i$, otherwise~$n=-\Delta/4$ is square-free and we apply~\eqref{DukeIwaniec} for~$\phi=T_4(g_i)$. 
We obtain
\[\abs{a_{-\Delta}(g_i)}=
\abs{a_{n}(\phi)}\leq C_4\cdot C_3(\eps)\cdot n^{1/2-\gamma+\eps}\leq C_4\cdot C_3(\eps)\cdot \abs{\Delta}^{1/2-\gamma+\eps},\\
\]
We omitted the possible factor~$\abs{1/4}^{1/2-\gamma+\eps}\leq 1$, as~$\gamma=1/28\leq1/2$. (One could improve~$C_4$ accordingly.)
\subsubsection{Including nontrivial conductors} We now introduce the conductor~$F$ of a negative discriminant number~$\Delta\cdot F^2$. The following is a classical argument involving Shimura lifts~\cite{Shimura} and Deligne's Ramanujan-Petersson~\cite[9.3.2]{Cohen} bounds~\cite[Th 5.6]{Deligne71}, \cite{DeligneWeil1} (in weight $2$, so ultimately on Hasse-Weil bound from Weil's Riemann hypothesis. See for instance~\cite{Lub App} for an account.)

We rely on a version of this argument from \cite{Kane}, which is tailored to our case. Note that contrary to most part of the reference~\cite{Kane}, this does not rely on the conjectural Generalised Riemann Hypothesis, only the proved Riemann Hypothesis for curves over finite fields. We apply \cite[Lem. 4.1]{Kane} and deduce, for any negative discriminant~$\Delta\cdot F^2$ of conductor~$F\geq 1$,
\[
\abs{a_{-\Delta\cdot F^2}(g_i)}\leq a_{-\Delta}(g_i) \cdot \sigma_0(F)^2\cdot F^{1/2}.
\]
For~$\eps>0$, there is a constant~$C_5(\eps)$ such that~$\sigma_0(F)\leq C_5(\eps)\cdot F^{\eps}$. We find
\begin{equation}\label{Duke etendu}
\abs{a_{-\Delta\cdot F^2}(g_i)}\leq C_5(\eps)\cdot{} C_4\cdot C_3(\eps)\cdot\abs{\Delta}^{1/2-\gamma+\eps}\cdot  (F^{2})^{1/4+\eps}\leq C_5(\eps)\cdot C_4\cdot C_3(\eps)\cdot\abs{\Delta F^2}^{1/2-\gamma+\eps}.
\end{equation}

We now go back to~$f$ and get, with~\eqref{orthogonality},
\[
\abs{a_{-\Delta\cdot F^2}(f)}^2=\sum \abs{b_i}^2\cdot\abs{a_n(g_i)}^2\leq\Nm{f}\cdot C_5(\eps)\cdot C_4\cdot C_3(\eps)\cdot\abs{\Delta F^2}^{1/2-\gamma+\eps}.
\]
We plug~\eqref{Duke bound} and conclude, for~$\eps,\eps''>0$, for any discriminant number~$\Delta F^2$,
\begin{equation}\label{final bound}
\abs{a_{-\Delta\cdot F^2}(f)}^2\leq C_2(\eps'')\cdot C_5(\eps)\cdot C_4\cdot C_3(\eps)\cdot\abs{\Delta F^2}^{1/2-\gamma+\eps}\cdot p^{10+\eps''}.
\end{equation}

\subsection{A Hecke bound} We use a variant of Hecke's bound (see for instance~\cite[Lem. 3.62, p.\,90]{ShimuraIntro}) valid for the non necessarily diagonalisable operator~$U_4$, encompassing integer and a half weights, and featuring uniform constants with respect to variations of level. (A polynomial dependence on the level would be enough for our concern.)

We will only need~$k=3/2$ and~$n=4$, or even~$n=2$. We consider, on Fourier series, for~$n\geq 0$, the operator
\[
U_n:\sum_{n\geq0}a_mq^m\mapsto\sum_{m\geq0} a_{nm}q^m.
\]
(See~\cite[III \S5 Prop.\,37]{Koblitz} for the link between~$U_\ell$ and~$T_\ell$ for a prime~$\ell$.) On non necessarily holomorphic functions, we let~$U_n$ act more generally via
\[
U_n(f):\tau\mapsto\frac{1}{n}\sum_{0\leq b<n}f((\tau+b)/n).
\]

\subsubsection*{Half-integer weight setting}
In half-integer weight, we work: firstly with a group~$G$ which is an extension (related to the ``metaplectic group'') of the group~$GL(2,\R)^+=\{g\in GL(2,\R)|\det(g)>0\}$  which acts, through the quotient~$P:G\to GL(2,\R)^+\to PGL(2,\R)^+=P(G)$, on the Poincar\'{e} half-plane~$H=\{\tau=x+iy|x\in\R,y\in\R_{>0}\}$; secondly with, for every~$k\in\frac{1}{2}\Z$,  a right action~$\placeholder|_k\placeholder$ on functions~$\phi:H\to\C$ which is  such that
\begin{equation}\label{invariance k k}
\left(\abs{\phi|_kg}^2y^k\right)(\tau)=\left(\abs{\phi}^2 y^k\right)(g\cdot \tau),
\end{equation} and 
that this action preserves the subspace of measurable (resp. holomorphic) functions. (This follows in practice from the explicit form of the automorphy factor, which we avoided talking about).

\subsubsection*{Petersson ``norms''}
We consider the vector space~$M_k(\Gamma)$ of functions, assumed measurable (and holomorphic if one wishes), which are invariant  under a subgroup~$\Gamma\leq G$ such that~$P(\Gamma)$ is a discrete subgroup in~$PGL(2,\R)^+$.
Recall that the measure~$\mu=dxdy/y^2$ is invariant under~$PGL(2,\R)^+$. The quotient measure of~$\abs{\phi}^2 y^k \mu$, by the counting Haar measure~$\mu_{P(\Gamma)}:E\subseteq P(\Gamma)\mapsto\abs{E}$ on~$P(\Gamma)$, is a well defined positive or zero measure on~$P(\Gamma)\backslash H$ and the Petersson ``norm'' (it need not be finite) of~$\phi$ in~$M_k(\Gamma)$
\begin{equation}\label{Petersson norm}
\Nm{\phi}_{M_k(\Gamma)}=\sqrt{
\int_{P(\Gamma)\backslash H} \left.\mu_{P(\Gamma)}\middle\backslash\abs{\phi}^2 y^k \mu\right.}=\sqrt{\int_F\abs{\phi}^2 y^k \mu}\quad\text{ ($=\Nm{\phi}_{L^2(F,y^k\mu)}$ if finite)}
\end{equation}
is well defined in~$\R_{\geq0}\cup\{+\infty\}$, where~$F$ stands for a measurable fundamental domain for~$P(\Gamma)$ acting on~$H$. See~\cite[I VII \S10, esp. Th. 4]{BBK I7} about quotient measure and fundamental domains.

\subsubsection{Statement}

Our aim is the following.
\begin{lemme}[Hecke bound]\label{Hecke bound}Assume the level~$\Gamma$ is a subgroup~$\Gamma\leq G$ such that~$P(\Gamma)$ is a lattice of~$PGL(2,\R)^+$ contained in~$PGL(2,\Q)$. 
Assume that~$f$ and~$U_n(f)$ both belong to~$M_k(\Gamma)$. 

Then we have, in~$\R_{\geq0}\cup\{+\infty\}$,
\begin{equation}\label{Hecke bound n}
\Nm{U_n(f)}_{M_k(\Gamma)}\leq n^{k/2}\cdot \Nm{f}_{M_k(\Gamma)} 
\end{equation}
In particular, if~$f$ is an eigenform of~$U_n$ with eigenvalue~$\lambda_n$ and finite (non zero) Petersson norm, then
\begin{equation}\label{Eigenvalue bound}
\abs{\lambda_n}\leq n^{k/2}.
\end{equation}
\end{lemme}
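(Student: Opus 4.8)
The plan is to realise $U_n$ through the slash action and to exploit its interaction with the companion operator $V_n\colon f\mapsto f(n\tau)$; the whole point is that $U_n$ is, up to the scalar $n^{k}$, a \emph{partial isometry}, so that $\Nm{U_nf}^2$ can be computed \emph{exactly} rather than merely estimated. A bare Cauchy--Schwarz over the $n$ cosets is too lossy here, precisely because $U_n$ is not normal, and would not yield the sharp constant $n^{k/2}$.

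First I would rewrite $U_n$. Writing $\alpha_b=\begin{pmatrix}1&b\\0&n\end{pmatrix}$ for $0\le b<n$ and using \eqref{invariance k k}, which turns $\abs{f|_k\alpha_b}^2 y^k$ into the pullback of $\abs{f}^2 y^k$ along $\alpha_b$, one gets
\[
U_n(f)=n^{k/2-1}\sum_{0\le b<n} f|_k\alpha_b .
\]
Two algebraic identities then drive the proof, both resting on the width-one invariance $f(\tau+1)=f(\tau)$ underlying the $q$-expansion at $\infty$. On the one hand, a direct computation on Fourier series gives $U_n\circ V_n=\mathrm{id}$, so that $P:=V_n\circ U_n$ is idempotent ($P^2=V_n(U_nV_n)U_n=P$); concretely $P(f)(\tau)=\frac1n\sum_{0\le b<n}f(\tau+b/n)$ retains the Fourier modes $a_m$ with $n\mid m$. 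On the other hand, the Petersson adjunction $\langle\phi|_k\gamma,\psi\rangle=\langle\phi,\psi|_k\gamma^\iota\rangle$ with $\gamma^\iota=\det(\gamma)\gamma^{-1}$, applied to each $\alpha_b^\iota=\begin{pmatrix}n&-b\\0&1\end{pmatrix}$ (for which $\psi|_k\alpha_b^\iota=n^{k/2}V_n\psi$, after using periodicity to absorb the $-b$), yields
\[
\langle U_n f,g\rangle=n^{k}\,\langle f,V_n g\rangle .
\]

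Combining the two, with $g=U_n f$, I would obtain the exact identity $\Nm{U_n f}^2=n^{k}\langle f,P f\rangle$, together with $\Nm{Pf}^2=\langle f,Pf\rangle$ (compute $\langle V_nU_nf,V_nU_nf\rangle$ from the adjunction and $U_nV_n=\mathrm{id}$). Hence $\langle f,Pf\rangle=n^{-k}\Nm{U_nf}^2$ is real and $\ge0$, and $f-Pf\perp Pf$. Pythagoras then gives $\Nm f^2=\Nm{f-Pf}^2+\Nm{Pf}^2\ge\Nm{Pf}^2=\langle f,Pf\rangle$, whence
\[
\Nm{U_n f}^2=n^{k}\langle f,Pf\rangle\le n^{k}\Nm f^2 ,
\]
which is \eqref{Hecke bound n}; specialising to an eigenform $U_nf=\lambda_n f$ of finite nonzero norm gives $\abs{\lambda_n}\le n^{k/2}$, i.e.\ \eqref{Eigenvalue bound}.

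The main obstacle is not the algebra but making these manipulations legitimate in the stated generality: $f$ and $g$ are only \emph{measurable}, the Petersson norms may be $+\infty$, and $V_n f$ (hence $Pf$) typically leaves $M_k(\Gamma)$ because $V_n$ changes the level. Accordingly I would read $\langle\cdot,\cdot\rangle$ as the integral $\int_F \phi\,\overline{\psi}\,y^k\mu$ over a fixed fundamental domain $F$ for $P(\Gamma)$, and justify the adjunction by the change of variables $\tau\mapsto\gamma^{-1}\tau$ together with the $PGL(2,\R)^+$-invariance of $\mu$ and of $\abs{f}^2y^k$, being careful that $\gamma^\iota$ acts on $H$ as $\gamma^{-1}$ so that no automorphy factor survives, exactly as in \eqref{invariance k k}. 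Finally the Pythagoras step needs the finiteness of the intermediate norms: the case $\Nm f=\infty$ is vacuous, while for $\Nm f<\infty$ one checks that the finitely many horizontal shifts $F+b/n$ of an integrable $\Gamma$-invariant density still carry finite mass, so that $\Nm{Pf}<\infty$ and the orthogonal decomposition is valid.
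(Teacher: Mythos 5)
Your argument is correct in substance, but it is not the paper's, and your opening premise about the paper's kind of argument is wrong: the paper proves \eqref{Hecke bound n} by exactly the ``bare'' coset estimate you dismiss. It writes $U_n(f)=\frac1n\sum_{0\le b<n}f_{\alpha_b}$ with $f_\alpha(\tau)=f(\alpha\cdot\tau)$, computes from \eqref{invariance k k} (after passing to a common finite-index subgroup $\Pi$ and tracking the indices) that each translate $\frac1n f_{\alpha_b}$ contributes $n^{k/2-1}\Nm{f}_{M_k(\Gamma)}$, and applies the triangle (Minkowski) inequality \eqref{computation 3}; since all $n$ terms have \emph{equal} norm, this already yields $n\cdot n^{k/2-1}\Nm{f}=n^{k/2}\Nm{f}$, the sharp constant of \eqref{Hecke bound n}. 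The crudeness the author himself flags is the gap between $n^{k/2}$ and Ramanujan--Petersson, not a loss in the constant $n^{k/2}$. Your alternative route --- the adjunction $\langle U_nf,g\rangle=n^{k}\langle f,V_ng\rangle$, the idempotent $P=V_nU_n$, and Pythagoras --- is genuinely different and buys more, namely the exact identity $\Nm{U_nf}^2=n^{k}\Nm{Pf}^2$ exhibiting $n^{-k/2}U_n$ as a partial isometry. But it also costs more than the paper's framework provides: the lemma is stated for an axiomatised metaplectic-type action of which only the modulus identity \eqref{invariance k k} is assumed, and the whole proof in the paper manipulates nothing but the densities $\abs{\phi}^2y^k\mu$. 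Your steps require exact, phase-sensitive identities of functions --- $U_n(f)=n^{k/2-1}\sum_b f|_k\alpha_b$, $\psi|_k\alpha_b^{\iota}=n^{k/2}V_n\psi$ with the shift $-b$ absorbed by periodicity, $U_nV_n=\mathrm{id}$ compatibly with the chosen lifts of the $\alpha_b$ and $\alpha_b^{\iota}$ in $G$ --- which hold for Shimura's explicit slash action but are not consequences of the stated hypotheses; you would need to add them as assumptions or verify them for the specific automorphy factor. Note finally that the a priori finiteness of $\Nm{U_nf}$ and $\Nm{Pf}$, which you need before any inner-product manipulation is legitimate, is itself obtained by precisely the coset-plus-triangle-inequality estimate you set aside.
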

N.B. In usual settings, say with~$G$ as in~\cite{Koblitz}, the hypothesis~$U_n(f)\in M_k(\Gamma)$ can be satisfied by passing to a sufficiently small finite index subgroup of~$\Gamma$. Even more, this hypothesis even becomes transparent if one normalises the Petersson ``norm'' so that it become insensitive to passing to a finite index subgroup.

N.B. Our method works similarly for the classical operators~$T_n$, by a suitable choice of family~$A\subseteq PGL(2,\Q)^+$ in the proof.



\subsubsection{}
We start with some formal properties.
\begin{lemme} For~$g$ in~$G$ and~$\phi$ in~$M_k(\Gamma)$, the function~$\phi|_kg$ belongs to~$M_k(g^{-1}\Gamma g)$.

 We have
\begin{equation}\label{isometrie}
\Nm{\phi|_k g}_{M_k(g^{-1}\Gamma g)}=\Nm{\phi}_{M_k(\Gamma)}.
\end{equation}

 If~$\Gamma'$ is a subgroup of~$\Gamma$ then~$M_k(\Gamma)\subseteq M_k(\Gamma')$ and
\begin{equation}\label{sous groupe}
\Nm{\phi}_{M_k(\Gamma')}=\sqrt{[P(\Gamma):P(\Gamma')]}\cdot\Nm{\phi}_{M_k(\Gamma)}.
\end{equation}
\end{lemme}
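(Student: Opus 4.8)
The plan is to reduce all three assertions to one observation: for $\phi\in M_k(\Gamma)$ the quantity $\abs{\phi}^2 y^k$ descends to a $P(\Gamma)$-invariant density on $H$, after which everything follows from the $PGL(2,\R)^+$-invariance of $\mu$ together with the pointwise identity~\eqref{invariance k k}. Throughout I write $\bar g=P(g)$ for the image of $g$ in $PGL(2,\R)^+$.

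For the membership claim I would use only that $|_k$ is a \emph{right} action. Given $\gamma\in\Gamma$, the computation
\[
(\phi|_k g)|_k(g^{-1}\gamma g)=\phi|_k(\gamma g)=(\phi|_k\gamma)|_k g=\phi|_k g,
\]
where the last step uses $\phi|_k\gamma=\phi$, shows that $\phi|_k g$ is invariant under $g^{-1}\Gamma g$; since the action preserves measurability, this already gives $\phi|_k g\in M_k(g^{-1}\Gamma g)$.

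For the isometry~\eqref{isometrie} I would read~\eqref{invariance k k} as saying that $\abs{\phi|_k g}^2 y^k$ is the pullback by $\bar g$ of $\abs{\phi}^2 y^k$; combined with $\bar g^\ast\mu=\mu$ this upgrades to an identity of measures $\abs{\phi|_k g}^2 y^k\mu=\bar g^\ast(\abs{\phi}^2 y^k\mu)$ on $H$. Since $\bar g$ carries a fundamental domain $F'$ for $P(g^{-1}\Gamma g)=\bar g^{-1}P(\Gamma)\bar g$ onto a fundamental domain $\bar g\cdot F'$ for $P(\Gamma)$, the change of variables $\tau\mapsto\bar g\tau$ gives
\[
\int_{F'}\abs{\phi|_k g}^2 y^k\mu=\int_{\bar g\cdot F'}\abs{\phi}^2 y^k\mu,
\]
which is exactly~\eqref{isometrie} by the definition~\eqref{Petersson norm}, and holds in $\R_{\geq0}\cup\{+\infty\}$ whether or not the integrals converge.

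For~\eqref{sous groupe} I would first note that invariance of $\phi$ under $\Gamma$ together with~\eqref{invariance k k} makes $\abs{\phi}^2 y^k$, and hence the measure $\abs{\phi}^2 y^k\mu$, invariant under all of $P(\Gamma)$. Fixing coset representatives $c_1,\dots,c_m$ for $P(\Gamma')\backslash P(\Gamma)$, with $m=[P(\Gamma):P(\Gamma')]$, the set $\bigsqcup_i c_i F$ is a fundamental domain for $P(\Gamma')$ whenever $F$ is one for $P(\Gamma)$; integrating the invariant density over each tile yields the common value $\Nm{\phi}_{M_k(\Gamma)}^2$, so the whole integral equals $m\cdot\Nm{\phi}_{M_k(\Gamma)}^2$, and taking square roots gives~\eqref{sous groupe}. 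The only genuinely delicate step, and the one where I expect to spend the most care, is the bookkeeping with the quotient-measure definition~\eqref{Petersson norm}: one must confirm that the pullback by $\bar g$ and the unfolding over cosets are compatible with the passage to $P(\Gamma)\backslash H$, so that all the identities are independent of the chosen fundamental domains. This is routine once the $P(\Gamma)$-invariance of the density is in hand, but it is the place where the argument could go astray if the objects were not treated as honest quotient measures.
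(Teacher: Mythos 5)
Your proposal is correct and follows essentially the same route as the paper: the membership claim from the right-action axiom, the isometry~\eqref{isometrie} from reading~\eqref{invariance k k} as a pullback identity for the density $\abs{\phi}^2y^k\mu$ combined with the $PGL(2,\R)^+$-invariance of $\mu$ and transport of fundamental domains, and~\eqref{sous groupe} by unfolding a fundamental domain of $P(\Gamma)$ over coset representatives and using the $P(\Gamma)$-invariance of the integrand. The paper phrases the middle step as a conjugation of quotient measures under the bijection $P(\Gamma)\backslash H\to P(g^{-1}\Gamma g)\backslash H$ and cites Bourbaki for the quotient-measure bookkeeping you flag as the delicate point, but the content is identical.
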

\begin{proof}
The first assertion contains two informations: the invariance~$\phi|_k g$, which follows from the property of group actions; its measurability (resp. holomorphy), which was assumed in the setting.

The second assertion  (compare~\cite[III Prop. 42]{Koblitz} for integral weight) follows from~\eqref{invariance k k}, which gives us more: the bijection
\[\Gamma\tau\mapsto P(g)^{-1}\Gamma\tau=P(g^{-1}\Gamma g) P(g)^{-1} \tau:P(\Gamma)\backslash H\to P(g^{-1}\Gamma g)\backslash H\]
conjugates the measure~$\mu_{P(\Gamma)}\backslash\left(\abs{\phi}^2y^k\mu\right)$ with~$\mu_{P(g^{-1}\Gamma g)}\backslash\left(\abs{\phi|_kg}^2y^k\mu\right)$. We integrate and get~\eqref{isometrie}.

As for the third assertion: we invoke a system~$R$ of representatives~$P(\Gamma)=\coprod_{r\in R} P(\Gamma')\cdot r$ and a measurable fundamental domain~$F$ for~$P(\Gamma)$. Then~$F'=\coprod_{r\in R}r^{-1}F$ is a measurable fundamental domain of~$P(\Gamma')$. We integrate
\[\int_{F'}\abs{\phi}^2 y^k \mu=\sum_{r\in R} \int_{r^{-1}F}\abs{\phi}^2 y^k \mu=\sum_{r\in R} \int_{F}\abs{\phi}^2 y^k \mu=\abs{R}\int_{F}\abs{\phi}^2 y^k=
[P(\Gamma):P(\Gamma')]\int_{F}\abs{\phi}^2 y^k,\]
where the second equality follows form the base change formula~\eqref{invariance k k} or from~\cite[I VII \S10, Cor. to Th. 4]{BBK I7}. We conclude by taking square root of leftmost and rightmost members.
\end{proof}
\subsubsection{}

Pick~$\alpha\in PGL(2,\R)^+$ corresponding to an affine transformation~$\alpha\cdot \tau=\lambda_\alpha\tau+\mu_\alpha$ and let~$g_\alpha$ be a lift of~$\alpha$ in~$G$: one has~$P(g_\alpha)=\alpha$. Then, from~\eqref{invariance k k}, and~$y(\alpha\cdot\tau)=\lambda_\alpha\cdot y(\tau)\in\C^\times$, follows
\begin{equation}
\abs{\phi}^2(\alpha\cdot\tau)\cdot \lambda_\alpha^k=\abs{\phi|_kg_\alpha}^2(\tau).
\end{equation}
We apply this relation to~$\phi=f\in M_k(\Gamma)$ and denote~$f_\alpha:\tau\mapsto f(\alpha\cdot \tau)$. We note that the integrand~$\abs{f_\alpha}^2y^k\mu=\lambda_\alpha^k\abs{f|_kg_\alpha}^2y^k\mu$ is invariant under~$P(\Gamma)_\alpha=\alpha^{-1}P(\Gamma)\alpha$ and a fortiori under any subgroup~$\Pi$. We  can integrate, then use~\eqref{sous groupe} for~$\phi=f|_kg_\alpha$ and~$\Gamma'=\stackrel{-1}{P}(\Pi)\cap\Gamma_\alpha$, and use~\eqref{isometrie} for~$\phi=f$,
\begin{equation}\label{computation 1}
\begin{split}
\lambda_\alpha^k\int\!\!\!\!\!\mu_{\Pi}\backslash\abs{f_\alpha}^2y^k\mu
=
\int\!\!\!\!\! \mu_{\Pi}\backslash\abs{f|_kg_\alpha}^2y^k\mu&=
\int\!\!\!\!\! \mu_{P(\Gamma')}\backslash\abs{f|_kg_\alpha}^2y^k\mu\\&=\Nm{f|_kg_\alpha}^2_{M_k(\Gamma')}\\
&=
[P(\Gamma)_\alpha:P']\cdot\Nm{f|_kg_\alpha}^2_{M_k({g_\alpha}^{-1}\Gamma{g_\alpha})}\\
&=
[P(\Gamma)_\alpha:P']\cdot\Nm{f}^2_{M_k(\Gamma)}.
\end{split}
\end{equation}

\subsubsection{}
 We now consider a finite family~$A$ of such elements~$\alpha$ of~$PGL(2,\Q)^+$, and we introduce~$U_A(f)=\sum_{\alpha\in A}f_\alpha$, which we assume to belong to~$M_k(\Gamma)$. We will prove, under the assumptions of the lemma~\ref{Hecke bound},
\begin{equation}\label{Hecke bound A}
\Nm{U_A(f)}_{M_k(\Gamma)}\leq 
\left(\sum_{\alpha\in A}\lambda_\alpha^{-k/2}\right)\Nm{f}_{M_k(\Gamma)}.
\end{equation}
Specialising to the case~$A=(\pm\left(\begin{smallmatrix}1&b\\0&n\end{smallmatrix}\right))_{0\leq b<n}$ and~$\lambda_\alpha=1/n$ we recover~$U_n=\frac{1}{n}U_A$ and~\eqref{Hecke bound A} gives~\eqref{Hecke bound n}. 
\begin{proof}
Each~$\alpha$ belongs to the commensurator of~$P(\Gamma)\leq PGL(2,\Q)^+$: there is hence a common subgroup~$\Pi$ of finite index in~$P(\Gamma)$ and in each of the~$P(\Gamma)_\alpha$. We may write, using~\eqref{sous groupe} for~$\phi=U_A(f)$ and~$\Gamma'=\stackrel{-1}{P}(\Pi)\cap\Gamma$,
\begin{equation}\label{computation 2}
\begin{split}
[P(\Gamma):P(\Gamma')]\cdot \Nm{U_A(f)}^2_{M_k(\Gamma)}
=
\Nm{U_A(f)}^2_{M_k(\Gamma')}&=
\int\!\!\!\!\!\mu_{P(\Gamma')}\backslash\abs{U_\alpha(f)}^2y^k\mu=
\int\!\!\!\!\!\mu_{\Pi}\backslash\abs{\sum_{\alpha\in A}f_\alpha}^2y^k\mu.
\end{split}
\end{equation}
In order to prove~\eqref{Hecke bound A} we may assume~$\Nm{f}_{M_k(\Gamma')}<+\infty$. Let~$F$ be a fundamental domain for~$\Pi$. We then have, in~$L^2(F,y^k\mu)$,  the triangle inequality 
\begin{equation}\label{computation 3}
\sqrt{\int\!\!\!\!\!\mu_{\Pi}\backslash\abs{\sum_{\alpha\in A}f_\alpha}^2y^k\mu}
=
\sqrt{\int_{\!\!\!F}\!\abs{\sum_{\alpha\in A}f_\alpha}^2y^k\mu}
\leq
\sum_{\alpha\in A}
\sqrt{\int_{\!\!\!F}\!\!\abs{f_\alpha}^2y^k\mu}
=
\sum_{\alpha\in A}
\sqrt{\int\!\!\!\!\!\mu_{\Pi}\backslash\abs{f_\alpha}^2y^k\mu}
\end{equation}
where we duly noted that each~$\abs{f_\alpha}^2y^k\mu$ is~$\Pi$ invariant and of finite integral as seen with~\eqref{computation 1}.
N.B.~We did not use a priori that the sums~$\sum_{\alpha\in I}f_\alpha$, with~$I\subseteq A$, are in some~$M_k(\Gamma'')$.

We plug together~\eqref{computation 2}, \eqref{computation 3} with~\eqref{computation 1}, yielding
\[
\sqrt{[P(\Gamma):\Pi]}\cdot \Nm{U_A(f)}_{M_k(\Gamma)}
\leq
\sum_{\alpha\in A}
\sqrt{[P(\Gamma_\alpha):\Pi]}\cdot\lambda^{-k/2}\Nm{f}_{M_k(\Gamma)}.
\]
We can conclude with the claim that~$
[P(\Gamma):\Pi]=
[P(\Gamma)_\alpha:\Pi]$, which are non zero, and simplifying the leading factors. This claim follows for instance, for~$\phi=1$ and~$k=0$ from~\eqref{isometrie} and~\eqref{sous groupe}, and that~$\Nm{1}_{M_0(\Gamma)}\in\R_{>0}$, which is from the assumption that~$P(\Gamma)$ is a lattice. 
\end{proof}

\end{document}